\renewcommand{\t}{\theta}
\newcommand{\dx}{\delta x}
\newcommand{\dt}{\delta \theta}
\newcommand{\ac}{\cos^{-1}}
\definecolor{mygreen}{rgb}{0.2,0.4,0.2}
\definecolor{mymauve}{rgb}{0.58,0,0.82}
\lstdefinestyle{customc}{
  language=Matlab,
  showstringspaces=false,
  basicstyle=\small\ttfamily,
  keywordstyle=\color{blue},
  commentstyle=\color{mygreen},
  identifierstyle=\color{black},
  stringstyle=\color{mymauve},
  deletekeywords={sech, zeros, plot, eye, sum, isnan, diff, hold, max, expm, sign, diag, ans, format, surf, axis, subplot, pi, sin, cos, eps, acos, rand, sqrt}
}
\title{A fast FFT-based discrete Legendre transform}
\shorttitle{A fast FFT-based discrete Legendre transform}
\author{Nicholas Hale\thanks{Department of Applied Mathematics, University of Stellenbosch, Stellenbosch, 7600, South Africa. (nickhale@sun.ac.za)} 
and Alex Townsend\thanks{Department of Mathematics, Massachusetts Institute of Technology, 77 Massachusetts Avenue,
Cambridge, MA 02139-4307. (ajt@mit.edu)}}
\begin{document}
\maketitle

%----------------------------------------------------------
\begin{abstract}
{An $\mathcal{O}(N(\log N)^2/\log\!\log N)$ algorithm for the computation of the discrete Legendre 
transform and its inverse is described. The algorithm combines a recently developed fast transform for converting between Legendre and Chebyshev
coefficients with a Taylor series expansion for Chebyshev 
polynomials about equally-spaced points in the frequency domain. 
Both components are based on the FFT, and as an intermediate step we obtain an $\mathcal{O}(N\log N)$ algorithm 
for evaluating a degree $N-1$ Chebyshev expansion at an $N$-point Legendre grid. 
Numerical results are given to demonstrate performance
and accuracy.}
% Keywords:
{discrete Legendre transform; Legendre polynomials; Chebyshev polynomials; fast Fourier transform}
\end{abstract}

%----------------------------------------------------------
\section{Introduction}
\label{sec:intro}
% The discrete Legendre transform (DLT) or finite Legendre transform is
% used throughout the computational sciences, for example in 
% $L^2$-projection~\cite[]{Mendez_98_01}, 
% the numerical solution of partial 
% differential equations based on the Legendre pseudospectral method \cite[]{}, 
% and optimal control problems~\cite[]{Elnagar_95_01}. 

% The current proposed 
% fast algorithms are relatively difficult to implement and have a significant 
% precomputational cost~\cite[]{}. In this paper we present a simple and fast 
% algorithm for the DLT. 
Given $N$ values $c_0,\ldots,c_{N-1}$, the discrete Legendre transform (DLT) or finite Legendre transform calculates the discrete sums 
\begin{equation} 
 f_k = \sum_{n=0}^{N-1} c_n P_n( x^{leg}_{k} ), \qquad 0\leq k\leq N-1,
\label{eq:fastTransform} 
\end{equation} 
where $P_n(x)$ denotes the degree $n$ Legendre polynomial and the Legendre nodes, $1>x^{leg}_{0}>\ldots>x^{leg}_{N-1}>-1$, are the 
roots of $P_N(x)$. 
% The sums in~\eqref{eq:fastTransform} can be 
% conveniently written as a matrix-vector product, 
% $\underline{f} = \mathbf{P_N}(\underline{x}^{leg})\underline{c}$. 
% It is the goal of this paper to describe fast algorithms for 
% computing $\underline{f}=\mathbf{P_N}(\underline{x}^{leg})\underline{c}$ 
% and the inverse discrete Legendre transform (IDLT), $\underline{c}=\mathbf{P_N}(\underline{x}^{leg})^{-1}\underline{f}$.
The inverse discrete Legendre transform (IDLT), which computes $c_0,\ldots,c_{N-1}$ given $f_0,\ldots,f_{N-1}$,
can be expressed as
\begin{equation} 
 c_n = (n+\tfrac{1}{2})\sum_{k=0}^{N-1} w_k^{leg}f_kP_n( x^{leg}_{k}), \qquad 0\leq n\leq N-1,
\label{eq:fastTransformInv} 
\end{equation} 
where $w_0^{leg},\ldots w_{N-1}^{leg}$ are the Gauss--Legendre quadrature weights. It is the goal of this paper to describe fast algorithms
%, i.e., with subquadratic complexity in $N$, 
for computing the DLT and IDLT.

Expansions in Legendre polynomials can be preferred over Chebyshev expansions because Legendre polynomials are 
orthogonal in the standard $L^2$-norm. However, Legendre expansions are less convenient in practice because fast 
algorithms are not as readily available or involve a precomputational cost that inhibits applications employing 
adaptive discretizations. By deriving fast algorithms
for the DLT and IDLT we take some steps towards removing this barrier and allowing Legendre expansions and adaptive discretizations at Legendre 
nodes to become a practical tool in computational science.

Our approach makes use of the fast Chebyshev--Legendre transform described in~\cite[]{Hale_14_01}, which is based 
on carefully exploiting an asymptotic expansion of Legendre polynomials and 
the fast Fourier transform (FFT). 
Using the FFT has advantages and disadvantages. 
On the one hand, fast and industrial-strength 
implementations of algorithms for computing the FFT 
are ubiquitous. On the other hand, the FFT is restricted to equally-spaced samples and
is therefore not immediately applicable to situations like computing the DLT. In this paper we overcome 
the equally-spaced restriction by considering 
Legendre nodes as a perturbation of a Chebyshev grid 
and employing a truncated Taylor series expansion.
A similar approach for the discrete Hankel transform is described in~\cite[]{Townsend_15_01}.
\begin{figure}[t]
{\footnotesize
\begin{center}
\hspace*{-25pt}
\tikzstyle{block} = [rectangle, draw, fill=blue!0,
    text width=6em, text centered, node distance=2.4cm, rounded corners, minimum height=4em]
\tikzstyle{line} = [draw, thick, color=black, -latex']
\hspace*{18pt}
\begin{tikzpicture}[scale=2, node distance = 1cm, auto]
    \node [block,text width=6.3em, node distance=1cm] (chebvalues) {Values at Chebyshev points};
    \node [block, right = 3.3cm of chebvalues,text width=6.3em ,node distance=1cm] (legcoeffs) {Legendre coefficients};
    \node [block, right = 3.3cm of legcoeffs,text width=6.3em, node distance=1cm] (chebcoeffs) {Chebyshev coefficients};
    \node [block, below left = 1cm and 1.2cm of legcoeffs,text width=6.3em ,node distance=1cm] (Iserles) {Values in the complex plane};
    \node [block, below right = 1cm and 1.2cm of legcoeffs,text width=6.3em ,node distance=1cm] (Tygert) {Values at Legendre points};
\path ([yshift=2pt]chebvalues.east) edge[<->, thick,dashed] node[above=-1.5pt] {\cite[]{Potts_98_01}} ([yshift=2pt]legcoeffs.west);
\path ([yshift=-2pt]chebvalues.east) edge[<-, thick,dashed] node[below=.5pt] {\cite[]{Mori9901}} ([yshift=-2pt]legcoeffs.west);
%\path ([yshift=2pt]legcoeffs.east) edge[<->, thick,dashed]  node[above=-1.5pt] {\cite[]{Alpert_91_01}} ([yshift=2pt]chebcoeffs.west);
\path ([yshift=-2pt]legcoeffs.east) edge[<->, thick,dashed]  node[above=-1.5pt] {\cite[]{Hale_14_01}} ([yshift=-2pt]chebcoeffs.west);
\path ([yshift=-2pt]legcoeffs.east) node[above right=5.5pt and 20pt] {\cite[]{Keiner_08_01}} ([yshift=-2pt]chebcoeffs.west);
\path ([yshift=-2pt]legcoeffs.east) node[above right=13pt and 4pt] {\cite[]{Alpert_91_01}}([yshift=-2pt]chebcoeffs.west);
\path (chebvalues.north) edge[ out=30, in=150, looseness=0.2, loop, distance=1.2cm, <->, thick,dashed] node[above=0pt] {DCT} (chebcoeffs.north);
\path [draw, -latex',thick,<-,dashed] ([xshift=-2pt]legcoeffs.south) |- node[below left = .5pt and 0pt] {\cite[]{Iserles_11_01}} (Iserles);
%\path [draw, -latex',thick,<->,dashed] ([xshift=2pt]legcoeffs.south) |- node[above right = -1.50pt and 4pt] {\cite[]{Tygert_10_01}} (Tygert);

% \path [draw, -latex',thick,<->,dashed] ([xshift=2pt]legcoeffs.south) |- node[below right = .5pt and -13pt] {\cite[]{Fenn_05_01}} (Tygert);
% \path ([xshift=2pt]legcoeffs.south) node[below right = 52.75pt and -1.5pt] {\cite[]{Tygert_10_01}} (Tygert);
% \path ([xshift=2pt]legcoeffs.south) node[below right = 61pt and -11.5pt] {\cite[]{Neil_10_01}}(Tygert);
\path [draw, -latex',thick,<->,dashed] ([xshift=2pt]legcoeffs.south) |- node[below right = .5pt and 3pt] {\cite[]{Potts_03_01}} (Tygert);
\path ([xshift=2pt]legcoeffs.south) node[below right = 53pt and .5pt] {\cite[]{Keiner_09_01}} (Tygert);
\path ([xshift=2pt]legcoeffs.south) node[below right = 60.75pt and 1pt] {\cite[]{Tygert_10_01}} (Tygert);

\path [draw, -latex',thick,->, red] ([xshift=0pt]chebcoeffs.south) |- node[above right = 16pt and 0pt] {NDCT} ([yshift=0pt]Tygert);
\path ([xshift=0pt]chebcoeffs.south) |- node[above right = 26pt and -79pt] {\cite[]{Dutt_93_01,Dutt_95_01}} ([yshift=0pt]Tygert);
\path ([xshift=0pt]chebcoeffs.south) |- node[above right = 18.5pt and -72pt] {\cite[]{Fenn_05_01}} ([yshift=0pt]Tygert);

\path [draw, -latex',thick,->, red] ([xshift=1pt, yshift=1pt]legcoeffs.south east) -- node[above right = -2pt and 0pt] {DLT} ([xshift=1pt, yshift=1pt]Tygert.north west);
\path [draw, -latex',thick,<-, red] ([xshift=-1pt, yshift=-1pt]legcoeffs.south east) -- node[above left = -8pt and 0pt] {IDLT} ([xshift=-1pt, yshift=-1pt]Tygert.north west);
\end{tikzpicture}
\end{center}
}
\caption[]{Some existing fast algorithms related to the DLT and IDLT. Solid lines depict transforms discussed in this 
paper. The ``non-uniform discrete cosine transform'' (NDCT) is described in Section~\ref{sec:dct}, the DLT in Section~\ref{sec:dlt}, 
and the IDLT in Section~\ref{sec:idlt}. This table is far from complete. Furthermore, some of the papers referenced belong
to more than one of the connecting  lines; for example, \cite[]{Keiner_09_01} %, like the present article, 
computes the DLT by combining
a transformation from Legendre coefficients to Chebyshev coefficients with an NDCT. We have tried to include as many of the 
key papers as possible, without overcomplicating the diagram. %(This figure is reproduced, with some modifications, from~\cite[]{Hale_14_01}.)
}\label{fig:existingmethods}
\end{figure}
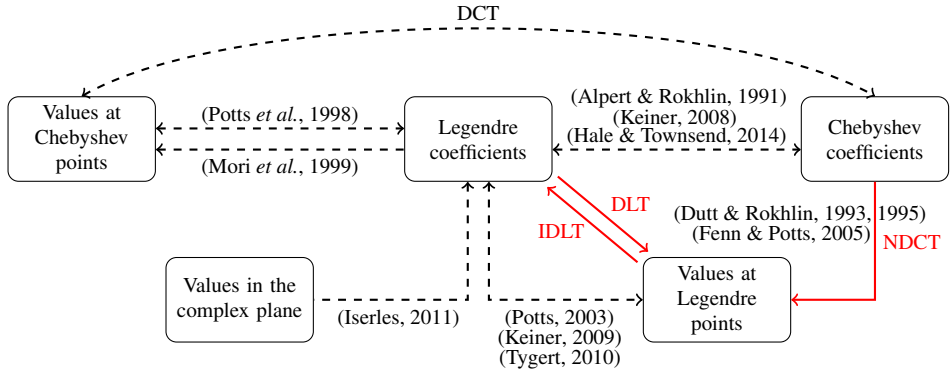%

Examples of existing approaches that may be used or adapted to compute the DLT include butterfly schemes~\cite[]{Neil_10_01}, 
divide-and-conquer approaches~\cite[]{Keiner_08_01, Tygert_10_01},
and the oversampled non-uniform discrete cosine transform~\cite[]{Potts_03_01, Fenn_05_01, Keiner_09_02}.
The algorithm 
presented here has three advantages: (1) It is simple (the main ingredients are
just the FFT and Taylor approximations), (2) There is essentially no precomputational cost; and, (3) There
are no algorithmic parameters for the user to tweak as they are all optimally determined by analysis. 
In particular, these final two points make the algorithm presented in this paper well-suited to applications where $N$
is not known in advance or when there are multiple accuracy goals.

It is interesting to note that before the work of~\cite{Glaser_07_01} 
there was no known stable method for computing the 
Legendre nodes %$x_0^{leg},\ldots,x_{N-1}^{leg}$ 
in fewer than $\mathcal{O}(N^2)$ operations.\footnote{The current state of the art for computing the Legendre nodes is due to~\cite{Bogaert_14_01}, 
whose $\mathcal{O}(N)$ algorithm can be rapidly compute millions of nodes to 16-digits of accuracy in under a second. 
For a discussion of the history of computing Gauss--Legendre quadrature nodes and weights, see \cite[]{Townsend_15_02}.}  Therefore, prior to 2007, it seems
likely that all DLT algorithms had a precomputational cost of at least $\mathcal{O}(N^2)$. Nowadays, we believe that the algorithms described above, with the exception of~\cite[]{Neil_10_01}, can be considered to have a lower precomputational cost. 

%Fast algorithms for computing the DLT are relatively recent. One possible 
%explanation is that 

The paper is structured as follows: In the next section we develop 
an $\mathcal{O}(N\log N)$ algorithm for evaluating a Chebyshev series expansion 
at Legendre nodes (we refer to this as a ``non-uniform discrete cosine 
transform'' or NDCT). In Section~\ref{sec:dlt} we combine this with the fast 
Chebyshev--Legendre transform from~\cite[]{Hale_14_01} to 
obtain an $\mathcal{O}(N(\log N)^2/\log\!\log N)$ algorithm for computing the DLT.
A similar algorithm for the IDLT is described in Section~\ref{sec:idlt}.
Throughout we use the following notation: A column vector with entries $v_0,\ldots,v_{N-1}$ 
is denoted by $\underline{v}$, a row vector by $\underline{v}^T$, a diagonal 
matrix with entries $v_0,\ldots,v_{N-1}$ by $D_{\underline{v}}$, and the $N\times N$ 
matrix with $(j,k)$ entry $P_k(x_{j})$ by $\mathbf{P_N}(\underline{x})$.%

%
%----------------------------------------------------------
\section{A non-uniform discrete cosine transform}
\label{sec:dct}
Before considering the DLT~\eqref{eq:fastTransform} we describe 
an $\mathcal{O}(N\log N)$ algorithm for evaluating a Chebyshev expansion at Legendre nodes,
\begin{equation}
 f_k = \sum_{n=0}^{N-1} c_n T_n( x^{leg}_{k} ), \qquad 0\leq k\leq N-1.
 \label{eq:nuDCT} 
\end{equation} 
This will become an important step in the 
DLT. Here, $T_n(x) = \cos(n\ac x )$ is the degree $n$ Chebyshev polynomial 
of the first kind and the sums in~\eqref{eq:nuDCT} may be rewritten as
\begin{equation} 
 f_k = \sum_{n=0}^{N-1} c_n \cos(n \t^{leg}_{k} ), \qquad 0\leq k\leq N-1,
 \label{eq:nuDCT2}
\end{equation} 
where $x^{leg}_k = \cos(\t^{leg}_k)$. 
% The sums in~\eqref{eq:nuDCT} can be conveniently written as a matrix-vector product 
% $\underline{f} = \mathbf{T_N}( \underline{x}^{leg} )\underline{c}$, 
% where $\mathbf{T_N}( \underline{x}^{leg} )$ denotes the so-called 
% {\em Chebyshev--Vandermonde matrix} evaluated at $\underline{x}^{leg}$
% \[
%  \mathbf{T_N}( \underline{x}^{leg}_N ) = 
%  \begin{pmatrix} 
%  T_0( x_{0}^{leg} ) & \ldots &T_{N-1}( x_{0}^{leg} ) \\[3pt] 
%  \vdots & \ddots & \vdots \\[3pt]  
%  T_0( x_{N-1}^{leg}) & \ldots &T_{N-1}( x_{N-1}^{leg} ) \cr 
%  \end{pmatrix}.
% \]
If the Legendre nodes in~\eqref{eq:nuDCT2} are replaced by 
the Chebyshev points of the first kind, i.e., 
\begin{equation}
 x_{k}^{cheb_1} = \cos(\t^{cheb_1}_k) = \cos\left(\frac{(k+\tfrac{1}{2})\pi}{N}\right),\qquad 0\leq k\leq N-1,
\label{eq:chebpts1}
 \end{equation}
%  or the Chebyshev points of the second kind, 
% \begin{equation}\label{eq:chebpts2}
%  x_{k}^{cheb_2} = \cos\left(\frac{k\pi}{N-1}\right),\qquad 0\leq k\leq N-1,
% \end{equation}
then~\eqref{eq:nuDCT} can be computed in $\mathcal{O}(N\log N)$ 
operations via a diagonally-scaled discrete cosine transform of type III (DCT-III)~\cite[]{Gentleman_72_01}.
% or DCT-I, respectively~\cite[]{Gentleman_72_01}. Unlike $\ac(\underline{x}^{cheb_1})$ and $\ac(\underline{x}^{cheb_2})$, 
Since~\eqref{eq:nuDCT2} has a similar form to a DCT, but with 
non-equally-spaced samples $\t^{leg}_k$, 
we refer to~\eqref{eq:nuDCT} as a ``{non-uniform discrete cosine transform}'' (NDCT).

Our algorithm to compute the NDCT is summarized as follows: consider 
the transformed Legendre nodes, 
%$\underline{\t}^{leg}$
$\t_0^{leg}, \ldots, \t_{N-1}^{leg}$,
as a perturbation of an equally-spaced grid, 
%$\underline{\t}^{\ast}$, 
$\t_0^{\ast},\ldots, \t_{N-1}^{\ast}$, i.e., 
\[
 \t^{leg}_k = \t^{\ast}_k + \dt_k, \qquad 0\le k\le N-1,
\]
and then approximate each $\cos(n\t^{leg}_k)$ term in~\eqref{eq:nuDCT2}
by a truncated Taylor series expansion about $\t_k^{\ast}$. 
If $|\dt_k|$ % = |\t_k^{leg} - \t_k^{\ast}|$
is small (see Section~\ref{subsec:legpts}) then only a few terms in the Taylor 
series expansion are required. Moreover, since the
%$\underline{\t}^{*}$
$\t_0^{\ast},\ldots, \t_{N-1}^{\ast}$
are equally-spaced, each term in 
the series can be 
computed in $\mathcal{O}(N\log N)$ operations via a DCT (see Section~\ref{subsec:dct9}).
The Legendre nodes, both $x_0^{leg}, \ldots, x^{leg}_{N-1}$ and $\t_0^{leg}, \ldots,\t^{leg}_{N-1}$, can be computed in $\mathcal{O}(N)$ operations 
using the algorithm described in \cite[]{Bogaert_14_01}.%
%%%%%%%%%%%%%%%%%%%%%%%%%%%%%%%%%%%%%%%%%%%%%%%%%%%%%%%%%%%%%%%%%%%%%%%%%%%%%%%%%%%%%%%%%%%%%%%%%%%%%%%
%%%%%%%%%%%%%%%%%%%%%%%%%%%%%%%%%%%%%%%%%%%%%%%%%%%%%%%%%%%%%%%%%%%%%%%%%%%%%%%%%%%%%%%%%%%%%%%%%%%%%%%
%%%%%%%%%%%%%%%%%%%%%%%%%%%%%%%%%%%%%%%%%%%%%%%%%%%%%%%%%%%%%%%%%%%%%%%%%%%%%%%%%%%%%%%%%%%%%%%%%%%%%%%
%
\subsection{Computing an NDCT via a Taylor series expansion}\label{subsec:taylor}
As in~\cite[]{Hale_14_01} we find it convenient to work in the $\theta = \ac x$ variable. 
There are three reasons for this:
\begin{itemize}
\itemsep0em
\item[(1)] The quantities $\theta^{leg}_k$ can be computed more accurately than $x^{leg}_k$~\cite[]{Swarztrauber_03_01}. 
%Furthermore, computation of the roots on the $\theta$ variable arises naturally in the algorithm described by \cite[]{Bogaert_14_01};
\item[(2)] The function $\ac x$ is ill-conditioned near $x=\pm 1$. In particular, since $\frac{d}{dx}\ac x = (1-x^2)^{-1/2}$, $1-(x_0^{leg})^2= \mathcal{O}(N^{-2})$, and $1-(x_{N-1}^{leg})^2 = \mathcal{O}(N^{-2})$, one can expect rounding errors that grow like $\mathcal{O}(N)$ when evaluating $\ac (x_0^{leg})$ and $\ac (x_{N-1}^{leg})$. Furthermore, when evaluating $T_{N-1}(\ac (x_0^{leg}))$ and $T_{N-1}(\ac (x_{N-1}^{leg}))$ this rounding error can increase to $\mathcal{O}(N^2)$;% (see Section~\ref{subsec:NumericalResults} for a more detailed discussion);
\item[(3)] The Taylor series of $T_n(\cos(\t+\dt)) = \cos(n(\t+\dt))$ about $\t$ involves simple trigonometric terms and 
is more convenient to work with than the ultraspherical polynomials that arise in a Taylor series of $T_n(x+\dx)$ about $x$.
\end{itemize}
The Taylor series expansion of $T_n(\cos(\theta+\dt))$ about $\theta\in[0, \pi]$ can be expressed as  
\begin{equation}
 \cos(n(\t+\dt)) = \sum_{\ell=0}^\infty \cos^{(\ell)}(n\t)\frac{(\dt)^\ell}{\ell!} = \sum_{\ell=0}^\infty (-1)^{\lfloor (\ell+1)/2\rfloor}\Phi_\ell(n\t)\frac{(n\dt)^\ell}{\ell!},
\label{eq:taylor}
 \end{equation}
where
\[
 \Phi_\ell(\t) = \begin{cases}\cos(\t), & \ell \text{ even,}\\ \sin(\t), & \ell \text{ odd.} \end{cases} %\left\{\begin{array}{l l}\cos(\t) & : \ell \text{ even,}\\\sin(\t) & : \ell \text{ odd.}\end{array}\right.
\]
If $|\dt|$ is small then a good 
approximation to $T_{n}(\cos(\t+\dt))$ can be obtained by truncating 
this series after just a handful of terms. The following lemma determines the 
accuracy we 
can expect from taking the first $L$ terms.
\begin{lemma}\label{en:RLx_bound}
For any $L\geq1$ and $n\geq 0$, 
 \[
  R_{L,n,\dt} := \max_{\t\in[0, \pi]}\left|\cos(n(\t+\dt)) - \sum_{\ell=0}^{L-1} \cos^{(\ell)}(n\t)\frac{(\dt)^\ell}{\ell!}\right|\leq \frac{\left(n|\dt|\right)^{L}}{L!}.
 \]
\end{lemma}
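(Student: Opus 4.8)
The plan is to recognise the finite sum inside the absolute value as the degree-$(L-1)$ Taylor polynomial of the single-variable function $g(\dt) := \cos(n(\t+\dt))$, expanded in $\dt$ about $\dt=0$, so that the quantity being bounded is exactly the associated Taylor remainder. The bound then follows from the observation that every derivative of $g$ is a sine or cosine scaled by a power of $n$, and hence is uniformly bounded by that power of $n$.

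First I would fix $\t\in[0,\pi]$ and write $g(\dt)=\cos(n(\t+\dt))$. Differentiating repeatedly in $\dt$ gives $g^{(\ell)}(\dt) = n^{\ell}\cos\!\left(n(\t+\dt)+\tfrac{\ell\pi}{2}\right)$, which is precisely the sign-and-$\Phi_\ell$ pattern already recorded on the right-hand side of~\eqref{eq:taylor}. In particular $g^{(\ell)}(0)=\cos^{(\ell)}(n\t)$, so the sum in the lemma is $\sum_{\ell=0}^{L-1} g^{(\ell)}(0)\,(\dt)^\ell/\ell!$, and the pointwise remainder $r_{\t} := \cos(n(\t+\dt)) - \sum_{\ell=0}^{L-1}\cos^{(\ell)}(n\t)(\dt)^\ell/\ell!$ satisfies $R_{L,n,\dt}=\max_{\t\in[0,\pi]}|r_{\t}|$. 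Crucially, since $|\cos|\le 1$, we have the uniform derivative bound $|g^{(L)}(s)|\le n^{L}$ for every real $s$.

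Next I would invoke Taylor's theorem with the Lagrange form of the remainder. Because $g$ is smooth, for the fixed $\t$ there exists some $\xi$ between $0$ and $\dt$ with $r_{\t}=g^{(L)}(\xi)\,(\dt)^{L}/L!$. Applying the uniform bound $|g^{(L)}(\xi)|\le n^{L}$ immediately gives $|r_{\t}|\le n^{L}|\dt|^{L}/L! = (n|\dt|)^{L}/L!$. Since this estimate is independent of $\t$, taking the maximum over $\t\in[0,\pi]$ preserves the inequality and yields the claimed bound on $R_{L,n,\dt}$.

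There is no serious obstacle here, but the step worth care is the choice of remainder representation, and it is the reason one should not simply bound the discarded Taylor tail term by term: a naive triangle inequality on $\sum_{\ell\ge L}|g^{(\ell)}(0)|(\dt)^\ell/\ell!$ only produces $\sum_{\ell\ge L}(n|\dt|)^{\ell}/\ell!$, which is strictly larger than the single clean term $(n|\dt|)^{L}/L!$ asserted in the lemma. The exact (Lagrange, or equivalently integral) remainder formula is what collapses the entire tail into one term, and the only analytic input it needs is the uniform bound $|g^{(L)}|\le n^{L}$. Once the sum is read as a Taylor polynomial in $\dt$, the result is elementary.
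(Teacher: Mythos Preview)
Your proposal is correct and is essentially the paper's own argument: both invoke the Lagrange (mean-value) form of the Taylor remainder and bound the $L$th derivative of $\theta\mapsto\cos(n\theta)$ uniformly by $n^{L}$ to obtain $(n|\dt|)^{L}/L!$. Your added remark distinguishing the Lagrange remainder from a naive tail bound is accurate but not needed for the proof itself.
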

\begin{proof} 
 By the mean-value form of the remainder we have 
 \[
  R_{L,n,\dt} \leq  \max_{\hat\t\in[0,\pi]} |\cos^{(L)}(n\hat\t)|\frac{|\dt|^{L}}{L!}\leq \frac{(n|\dt|)^{L}}{L!},% = n\|C_{n-L}^{(L)}\|_\infty \frac{|2\dx|^{L}}{2L}. 
 %\label{eqn:meanValue}
 \]
as required.
\end{proof}

For $0\leq k\leq N-1$, we write $\t_k^{leg} = \t_k^{*} + \dt_k$ and substitute the first $L$ terms 
of~\eqref{eq:taylor} into~\eqref{eq:nuDCT} to obtain,
\begin{eqnarray}\label{eq:subsTaylor}
 f_{k,L,\dt_k} &=& \sum_{n=0}^{N-1}c_n\left(\sum_{\ell=0}^{L-1} (-1)^{\lfloor (\ell+1)/2\rfloor}\Phi_\ell(n\t_k^{*})\frac{(n\dt_k)^\ell}{\ell!}\right)\\
 &=& \sum_{\ell=0}^{L-1} (-1)^{\lfloor (\ell+1)/2\rfloor}\frac{\dt_k^\ell}{\ell!}\left(\sum_{n=0}^{N-1}(n^\ell c_n)\Phi_\ell(n\t_k^{*})\right),
 \label{eqn:terms} 
\end{eqnarray}
where the second equality follows by interchanging the order of the summations. 
% At first~\eqref{eq:subsTaylor} may not look useful as we have expressed the
% evaluation of a Chebyshev expansion at $x_k^{leg}$ by the evaluation of 
% Chebyshev and ultraspherical expansions at $x_k^{*}$. However, $x_0^{*},\ldots,x_{N-1}^*$ 
% will be chosen to be a shifted equally-spaced grid so that each expansion 
% in~\eqref{eqn:terms} can be rapidly evaluated using DCTs and DSTs. 
\begin{corollary}\label{cor:fkbnd}
%Let $L \ge 1$ be an integer and $x \in [-1,1]$. Then, 
If $f_k$ is as in~\eqref{eq:nuDCT} and $f_{k,L,\dt_k}$ as in~\eqref{eq:subsTaylor}, then for any $L\geq1$, 
\[
 |f_k - f_{k,L,\dt_k}| \leq \sum_{n=0}^{N-1}|c_n|R_{L,n,\dt_k} \leq \frac{\left(N|\dt_k|\right)^{L}}{L!}||\underline{c}||_1, \qquad 0\leq k\leq N-1.
\] 
\end{corollary}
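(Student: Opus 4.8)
The plan is to treat this as a direct consequence of Lemma~\ref{en:RLx_bound} combined with the triangle inequality; since both the exact value $f_k$ in~\eqref{eq:nuDCT2} and its truncation $f_{k,L,\dt_k}$ in~\eqref{eq:subsTaylor} are finite linear combinations of the same coefficients $c_n$, the error decouples term-by-term in $n$ and no genuine difficulty arises. I therefore expect the corollary to follow in two short steps.

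First I would subtract~\eqref{eq:subsTaylor} from~\eqref{eq:nuDCT2}, keeping the sum over $n$ on the outside and using $\t_k^{leg}=\t_k^{*}+\dt_k$, to write
\[
 f_k - f_{k,L,\dt_k} = \sum_{n=0}^{N-1} c_n\left(\cos(n\t_k^{leg}) - \sum_{\ell=0}^{L-1}\cos^{(\ell)}(n\t_k^{*})\frac{(\dt_k)^\ell}{\ell!}\right).
\]
Applying the triangle inequality and then identifying the bracketed quantity as the $n$th Taylor remainder evaluated at the particular point $\t=\t_k^{*}$, I would bound each such remainder by its maximum over $\t\in[0,\pi]$, which is exactly $R_{L,n,\dt_k}$. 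This yields the first inequality $|f_k-f_{k,L,\dt_k}|\le\sum_{n=0}^{N-1}|c_n|R_{L,n,\dt_k}$.

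For the second inequality I would invoke Lemma~\ref{en:RLx_bound} to replace each $R_{L,n,\dt_k}$ by $(n|\dt_k|)^L/L!$, and then use the crude monotonicity bound $n\le N$, valid for $0\le n\le N-1$, to pull the common factor $(N|\dt_k|)^L/L!$ out of the sum, leaving $\sum_{n=0}^{N-1}|c_n|=\|\underline{c}\|_1$. The only point that requires any care is matching the interval $[0,\pi]$ over which the Taylor remainder is maximized in Lemma~\ref{en:RLx_bound} with the location $\t_k^{*}$ of the expansion point; this is immediate because the $\t_k^{*}$ are the equally-spaced grid introduced above and hence lie in $[0,\pi]$. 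Everything else is routine.
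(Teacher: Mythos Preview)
Your proposal is correct and follows exactly the approach of the paper, which simply states that the result follows immediately from applying Lemma~\ref{en:RLx_bound} to each of the terms in~\eqref{eq:nuDCT}. You have merely spelled out the triangle-inequality and $n\le N$ steps that the paper leaves implicit.
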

\begin{proof}
 This follows immediately from applying Lemma~\ref{en:RLx_bound} to each of the terms in~\eqref{eq:nuDCT}.
\end{proof}

% It is our aim to select $L$ so that $|f_k - f_{k,L,\dt_k}|\leq \varepsilon||\underline{c}||_1$ for $0\leq k\leq N-1$, where 
% $\varepsilon>0$ is an accuracy goal. 
Hence, if we approximate $\t_0^{leg},\ldots,\t_{N-1}^{leg}$ by points 
$\t^\ast_0, \ldots, \t_{N-1}^\ast$ such that
\begin{itemize}\itemsep0em
\item $\displaystyle\max_{0\le k\le {N-1}}|{\t}_k^{leg}-{\t}_k^{*}|$ is sufficiently small (see Section~\ref{subsec:legpts}); and, 
\item $\t^\ast_0, \ldots, \t_{N-1}^\ast$ is equally spaced,
\end{itemize}
then the number of terms, $L$, required to achieve an accuracy of $|f_k - f_{k,L,\dt_k}|\leq \varepsilon||\underline{c}||_1$ is small. 
Moreover, the inner sums of~\eqref{eqn:terms} can be computed 
via discrete cosine and sine transforms (see Section~\ref{subsec:dct9}), which  
can be computed in $\mathcal{O}(N\log N)$ via an FFT. This gives us the 
foundations of a fast algorithm.

\subsection{Legendre nodes as a perturbation of a Chebyshev-like grid}\label{subsec:legpts}
The Chebyshev points of the first kind, ${x}^{cheb_1}_0, \ldots, {x}^{cheb_1}_{N-1}$, are
an approximation to ${x}_0^{leg}, \ldots, {x}_{N-1}^{leg}$ which satisfy the requirements in the two bullet points above. In particular, letting $\theta_k^{cheb_1} = \ac(x^{cheb_1}_k)$, one can readily show that~\cite[(18.16.3)]{NISTHandbook}
\[
 |{\t}^{leg}_k - {\t}^{cheb_1}_k| \leq \frac{\pi}{2(N+1)}, \qquad 0\leq k\leq N-1.
%\label{eq:errorBound}
 \]
However, this bound is a little weak. A better bound is given in Lemma~\ref{lemma:szego}.

Another possibility is to consider the leading order term of the asymptotic expansion of Legendre polynomials of 
large degree~\cite[]{Stieltjes_1890_01}:
\[
 P_N(\cos \theta) \sim \sqrt{\frac{4}{\pi}}\frac{\Gamma(N+1)}{\Gamma(N+\tfrac{3}{2})}\frac{\cos\!\left((N+\tfrac{1}{2})\theta-\tfrac{\pi}{4}\right)}{\left(2\sin\theta\right)^{1/2}},\qquad N\rightarrow\infty.
\]
The zeros of this leading term are 
\[
    \theta^{cheb_*}_{k} = \frac{(k+\tfrac{3}{4})\pi}{N+\tfrac{1}{2}}, \qquad 0\leq k\leq N-1,
\]
which are also equally-spaced and provide us with an approximation to Legendre nodes. 
In fact, multiplying the numerator and denominator by a factor
of $2$ and comparing to~\eqref{eq:chebpts1} we see that these points correspond precisely to 
the odd terms (i.e., when the index $k$ is odd) in the ($2N+1$)-point
Chebyshev grid of the first kind. 

The following lemma
shows how closely $\t^{cheb_\ast}_k$ and $\t^{cheb_1}_k$ approximate $\t^{leg}_k$, 
and Figure~\ref{fig:dx} demonstrates that the derived bounds are tight.

\begin{lemma}\label{lemma:szego}
 Let $\theta_{0}^{leg},\ldots,\theta_{N-1}^{leg}$ denote the $N$ roots of $P_N(\cos\theta)$. Then, 
 %\begin{equation}\label{eqn:szego}
 % \left|x_{k}^{leg} - \cos\theta^*_{k} \right| \leq \frac{0.12642}{N^2}, \qquad 0\leq k\leq N-1.
 %\end{equation}
 \begin{equation}\label{eqn:szego}
 \max_{0\leq k\leq N-1}\left|\theta_{k}^{leg} - \theta^{cheb_*}_{k} \right| \leq \frac{1}{3\pi(2N+1)} \leq \frac{1}{6\pi N},
 \end{equation}
 and 
 \begin{equation}\label{eqn:szegob}
  \max_{0\leq k\leq N-1}\left|{\t}_k^{leg} - {\t}_k^{cheb_1}\right| \leq \frac{3\pi^2+2}{6\pi(2N+1)} \leq \frac{0.83845}{N}.
 \end{equation}
\end{lemma}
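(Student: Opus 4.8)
The plan is to obtain a precise asymptotic formula for the Legendre nodes $\theta_k^{leg}$ with a rigorously controlled error term, compare it to the explicit formula for $\theta_k^{cheb_*}$, and then derive the bound for $\theta_k^{cheb_1}$ by a triangle inequality.
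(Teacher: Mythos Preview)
Your overall structure matches the paper: first bound $|\theta_k^{leg}-\theta_k^{cheb_*}|$, then obtain~\eqref{eqn:szegob} by a triangle inequality via $|\theta_k^{cheb_1}-\theta_k^{cheb_*}| = \pi|2k-N+1|/(2N(2N+1))$. The difference is in how the first bound is established. The paper does \emph{not} derive an asymptotic formula for $\theta_k^{leg}$ from scratch; instead it quotes two literature results directly: Szeg\H{o}'s inequality $\theta_k^{cheb_*}<\theta_k^{leg}$ (for $k\le\lfloor N/2\rfloor-1$) to fix the sign, and a bound of Brass (1997) of the form
\[
\theta_k^{leg}-\theta_k^{cheb_*}\;\le\;\frac{1}{2(2N+1)^2}\cot\theta_k^{cheb_*},
\]
after which $\cot x<1/x$ and the explicit value $\theta_k^{cheb_*}=(4k+3)\pi/(2(2N+1))$ give the constant $1/(3\pi(2N+1))$ at $k=0$; symmetry handles the remaining $k$.

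Your route via a rigorous asymptotic expansion of $\theta_k^{leg}$ (e.g.\ of Gatteschi or Olver type) is viable in principle, but the delicate point is that a generic $O(N^{-1})$ remainder is not enough: you need a remainder that is uniform in $k$ and small enough to yield the sharp constant $1/(3\pi(2N+1))$, which is exactly what the Brass estimate packages for you. If you carry out your plan, be explicit about which asymptotic formula you use and why its error term is controlled uniformly near the endpoints $k=0$ and $k=N-1$, since that is where the maximum in~\eqref{eqn:szego} is attained.
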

\begin{proof} 
We first consider $|\theta_{k}^{leg} - \theta^{cheb_*}_{k} |$
and restrict our attention to $0\leq k\leq \lfloor N/2\rfloor-1$. From \cite[Thm.\ 6.3.2]{Szego_36_01} we know that $\theta^{cheb_*}_k < \theta_k^{leg}$. 
Moreover, by~\cite[Thm.\ 3.1 \& eqn.\ (3.2)]{Brass_97_01} and $\cot(x)<1/x$, we have 
\[
 \theta_{k}^{leg}-\theta^{cheb_*}_{k} \leq \frac{1}{2(2N+1)^2}\cot\theta^{cheb_*}_k \leq \frac{4N+2}{2(2N+1)^2(4k+3)\pi}\leq \frac{1}{3(2N+1)\pi},\qquad 0\leq k\leq \lfloor N/2\rfloor-1. 
\]
%Thus, $|\theta_{k}^{leg} - \theta^{cheb_*}_{k}|\leq (3\pi(2N+1))^{-1}$ for $0\leq k\leq \lfloor N/2\rfloor-1$. 
By symmetry of the ${\t}_k^{leg}$ and ${\t}^{cheb_*}_k$, the 
same bound holds for $k > \lfloor N/2\rfloor-1$ when we take the modulus. %We obtain~\eqref{eqn:szego}.

The bound on $|{\t}_k^{leg} - {\t}_k^{cheb_1}|$ follows from the triangle inequality. That is, 
for $0\leq k\leq N-1$, we have
\[
 \left|{\t}_k^{leg} - {\t}_k^{cheb_1}\right| \leq \left|\theta_{k}^{leg} - \theta^{cheb_*}_{k} \right| + \left|\theta_{k}^{cheb_1} - \theta^{cheb_*}_{k} \right|\leq \frac{1}{3\pi(2N+1)} + \frac{\pi(2k-N+1)}{2N(2N+1)}.   
\]
Since $k\leq N-1$, we obtain 
\[
 \left|{\t}_k^{leg} - {\t}_k^{cheb_1}\right|\leq \frac{1}{3\pi(2N+1)} + \frac{\pi(N-1)}{4N(N+1/2)} \leq \frac{3\pi^2+2}{6\pi(2N+1)} \leq \frac{0.83845}{N}, \qquad 0\leq k\leq N-1.
\]
\end{proof}

\begin{figure}[t]
\vspace{-4cm}
\begin{center}
\begin{overpic}[width=.5\textwidth]{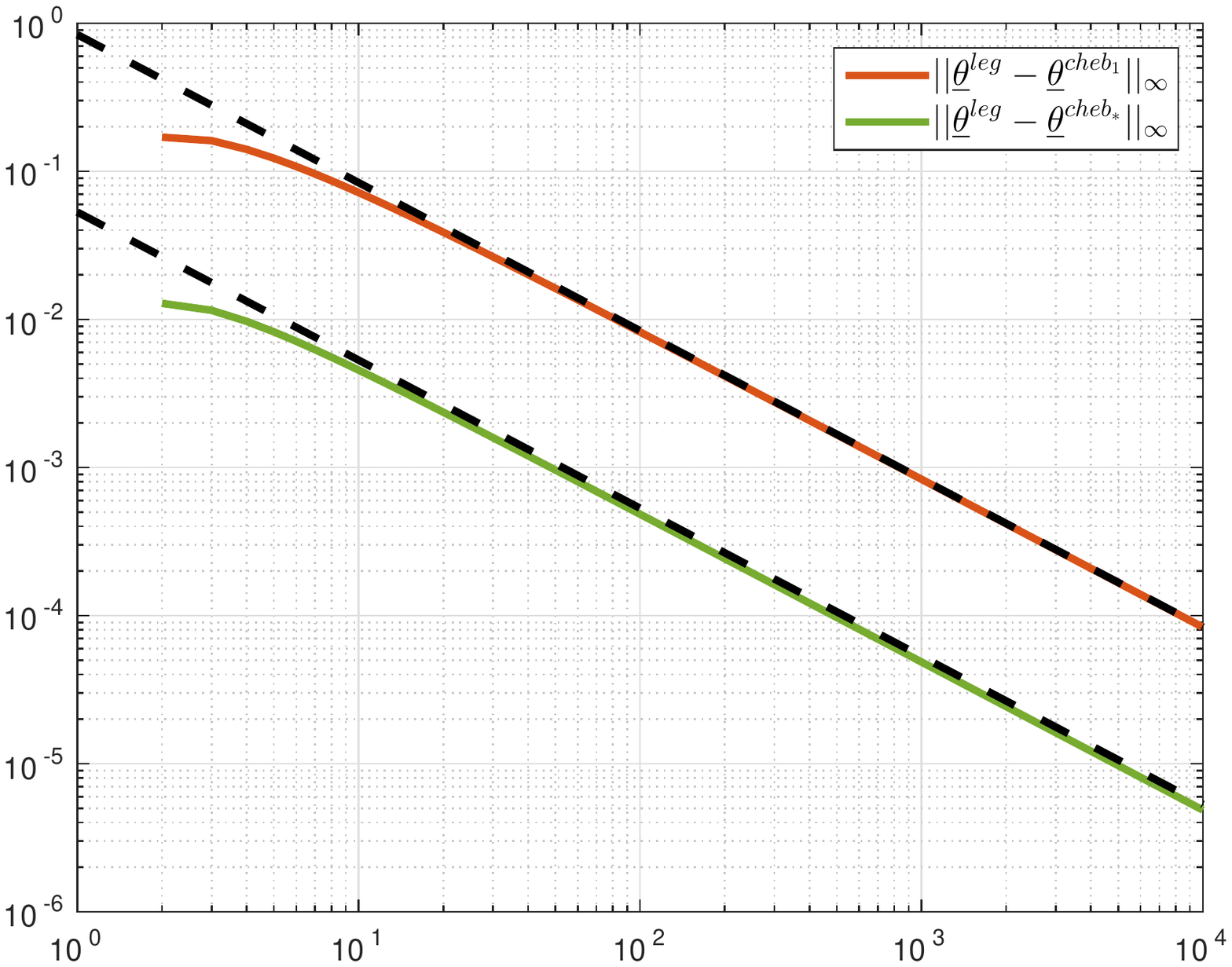} 
\put(35,0) {\footnotesize{$N$}}
\put(30,41) {\footnotesize{\rotatebox{-28.5}{$\|\underline{\t}^{leg} - \underline{\t}^{cheb_1}\|_\infty\leq 0.83845N^{-1}$}}}
\put(30,32) {\footnotesize{\rotatebox{-28.5}{$\|\underline{\t}^{leg} - \underline{\t}^{cheb_*}\|_\infty\leq (6\pi N)^{-1}$}}}
\end{overpic}
% \hspace*{20pt}
% \begin{overpic}[width=.4\textwidth]{dx} 
% \put(50,-3) {\footnotesize{$N$}}
% %\put(-5,16) {\footnotesize{\rotatebox{90}{Absolute maximum error}}}
% \put(45,63) {\footnotesize{\rotatebox{-19}{\scriptsize $\|\underline{x}^{leg} - \underline{x}^{cheb_1}\|_\infty\leq 0.125\pi N^{-1}$}}}
% \put(41,51) {\footnotesize{\rotatebox{-35}{\scriptsize $\|\underline{x}^{leg} - \underline{x}^{cheb_*}\|_\infty\leq 0.12642N^{-2}$}}}
% \end{overpic}
\caption{The maximum distance between corresponding entries of $\underline{\theta}^{leg}$ and $\underline{\theta}^{cheb_1}$ (upper solid line) and 
$\underline{\theta}^{leg}$ and $\underline{\theta}^{cheb_*}$ (lower solid line) as a function of $N$. The dashed lines depict the bounds derived in 
Lemma~\ref{lemma:szego}, which appear to be tight.}
\label{fig:dx}
\end{center}
\end{figure}

\begin{corollary}\label{cor:fkbounds}
Choosing $\theta_k^\ast = \t^{cheb_1}_k$ or $\theta_k^\ast = \t^{cheb_\ast}_k$ for $0\le k\le {N-1}$ in~\eqref{eqn:terms}, we have
\begin{equation}\label{eq:C_La}
\max_{0\leq k\leq N-1}|f_k - f_{k,L,\dt^{cheb_1}_k}|\leq \frac{(0.83845)^L}{L!}||\underline{c}||_1
\end{equation}
or
\begin{equation}\label{eq:C_Lb}
\max_{0\leq k\leq N-1}|f_k - f_{k,L,\dt^{cheb_\ast}_k}|\leq \frac{1}{(6\pi)^LL!}||\underline{c}||_1,
\end{equation}
respectively.
\end{corollary}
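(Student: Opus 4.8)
The plan is to combine the per-node truncation bound from Corollary~\ref{cor:fkbnd} with the node-displacement estimates established in Lemma~\ref{lemma:szego}. The crucial observation is that Corollary~\ref{cor:fkbnd} controls the error entirely through the single quantity $N|\dt_k|$, while Lemma~\ref{lemma:szego} shows that each displacement $|\dt_k| = |\t_k^{leg} - \t_k^\ast|$ decays like $\mathcal{O}(N^{-1})$. Consequently the product $N|\dt_k|$ is bounded by a constant independent of $N$, and this is precisely what converts the $N$-dependent estimate of Corollary~\ref{cor:fkbnd} into the two $N$-free bounds claimed here. Each of the two displayed inequalities then corresponds to one of the two candidate equally-spaced grids.

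For the first choice $\t_k^\ast = \t_k^{cheb_1}$, I would set $\dt_k = \dt_k^{cheb_1} = \t_k^{leg} - \t_k^{cheb_1}$ and invoke the bound~\eqref{eqn:szegob}, which gives $|\dt_k^{cheb_1}| \leq 0.83845/N$ uniformly in $k$. Substituting this into Corollary~\ref{cor:fkbnd} yields $N|\dt_k^{cheb_1}| \leq 0.83845$, so that each per-node error satisfies $|f_k - f_{k,L,\dt_k^{cheb_1}}| \leq (0.83845)^L/L!\,||\underline{c}||_1$. Taking the maximum over $0\leq k\leq N-1$ then gives~\eqref{eq:C_La}.

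For the second choice $\t_k^\ast = \t_k^{cheb_\ast}$, the argument is identical except that I would use the sharper displacement estimate~\eqref{eqn:szego}, namely $|\dt_k^{cheb_\ast}| \leq 1/(6\pi N)$. This gives $N|\dt_k^{cheb_\ast}| \leq 1/(6\pi)$, and hence $|f_k - f_{k,L,\dt_k^{cheb_\ast}}| \leq 1/\!\left((6\pi)^L L!\right)||\underline{c}||_1$ for every $k$; the maximum over $k$ is then~\eqref{eq:C_Lb}.

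Since both inequalities follow by direct substitution of the Lemma~\ref{lemma:szego} bounds into Corollary~\ref{cor:fkbnd}, I do not expect any genuine analytical obstacle: the corollary is essentially a bookkeeping step recording the two concrete error constants produced by the two equally-spaced grids. The only point warranting a moment's care is that the displacement bounds in Lemma~\ref{lemma:szego} hold uniformly in $k$, so that the same constant governs the maximum over all $N$ nodes; this is guaranteed because those estimates are already stated as $\max_{0\leq k\leq N-1}$ bounds, and so the uniform bound passes directly through the maximum appearing on the left-hand side of~\eqref{eq:C_La} and~\eqref{eq:C_Lb}.
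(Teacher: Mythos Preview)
Your proposal is correct and follows exactly the paper's approach: the paper's proof consists of the single sentence ``Follows immediately from combining Corollary~\ref{cor:fkbnd} and Lemma~\ref{lemma:szego}.'' Your write-up simply spells out that immediate combination in detail.
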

\begin{proof}
Follows immediately from combining Corollary~\ref{cor:fkbnd} and Lemma~\ref{lemma:szego}.
% In particular, we have, for $0\leq k\leq N-1$, 
% \begin{equation}
%  |f_k - f_{k,L,\dt_k^\ast}| \leq \frac{(N/(3\pi(2N+1)))^{L}}{L!}||\underline{c}||_1\leq \frac{(6\pi)^{-L}}{L!}||\underline{c}||_1,
% \end{equation}
% as required. 
\end{proof}

We note that in~\eqref{eq:C_La} and~\eqref{eq:C_Lb} the terms multiplying $\|\underline{c}\|_1$ are independent of $N$, 
and so the number of terms required in~\eqref{eqn:terms} to obtain a given precision is bounded independently of $N$. 
Table~\ref{tbl:L} shows these terms for $1\leq L\leq 18$. 
Double precision is obtained for $L \gtrsim9$ terms when $\t_k^\ast = \t^{cheb_\ast}_k$ and $L \gtrsim17$ terms when $\t_k^\ast = \t^{cheb_1}_k$. 
%For a given precision $\varepsilon>0$ we take the minimum integer $L$ that ensures we have $|f_k - f_{k,L,\dt_k}|\leq \varepsilon||\underline{c}||_1$.
%Whilst these bounds are useful for predicting how many terms will be 
%required in the Taylor series, in practice one can simply check the magnitude 
%of the $\ell$th term and truncate the expansion when this becomes 
%sufficiently small.%
\begin{table}[h!]
\footnotesize\bgroup\def\arraystretch{1.1}\tabcolsep=1.5pt\hspace*{-15pt}%
\begin{tabular}{ | c | c | c | c | c | c | c | c | c | c |}%
\hline
  $\quad L\quad$   & 1 & 2 & 3 & 4 & 5 & 6 & 7 & 8& 9\\\hline
  $0.83845^L/L!$ & $8.4\times10^{-1}$ & $3.5\times10^{-1}$ & $9.8\times10^{-2}$ & $2.1\times10^{-2}$ & $3.5\times10^{-3}$ & $4.8\times 10^{-4}$ & $5.8\times 10^{-5}$ & $6.1\times10^{-6}$ & $5.6\times10^{-7}$\\
  %$(L!)^{-1}$ & $1.0\times10^{0}$ &  $5.0\times10^{-1}$ &  $1.7\times10^{-1}$ & $4.2\times10^{-2}$ & $8.3\times10^{-3}$ & $1.4\times10^{-3}$ & $2.0\times10^{-4}$ & $2.5\times10^{-5}$ & $2.8\times10^{-6}$\\ 
  $((6\pi)^LL!)^{-1}$ & $5.3\times10^{-2}$ &  $1.4\times10^{-3}$ &  $2.5\times10^{-5}$ & $3.3\times10^{-7}$ & \cellcolor{gray!25}$3.5\times10^{-9}$ & $3.1\times10^{-11}$ & $2.4\times10^{-13}$ & $1.6\times10^{-15}$ & \cellcolor{gray!25}$9.2\times10^{-18}$\\\hline\hline
  $\quad L\quad$   & 10 & 11 & 12 & 13 & 14 & 15 & 16 & 17 & 18\\\hline
  $0.83845^L/L!$ & $4.7\times10^{-8}$ & \cellcolor{gray!25}$3.6\times10^{-9}$ & $2.5\times10^{-10}$ & $1.6\times10^{-11}$ & $9.7\times10^{-13}$ & $5.4\times10^{-14}$ & $2.9\times10^{-15}$ & \cellcolor{gray!25}$1.4\times10^{-16}$ & $6.6\times10^{-18}$\\\hline
  %$(L!)^{-1}$ & $2.8\times10^{-7}$ &  \cellcolor{gray!25}$2.5\times10^{-8}$ &  $2.1\times10^{-9}$ & $1.6\times10^{-10}$ & $1.2\times10^{-11}$ & $7.6\times10^{-13}$ & $4.8\times10^{-14}$ & $2.8\times10^{-15}$ & \cellcolor{gray!25}$1.6\times10^{-16}$\\
  $((6\pi)^LL!)^{-1}$ & $4.9\times10^{-20}$ &  $2.3\times10^{-22}$ & $1.0\times10^{-24}$ & $4.2\times10^{-27}$ & $1.6\times10^{-29}$ & $5.7\times10^{-32}$ & $1.9\times10^{-34}$ & $5.9\times10^{-37}$ & $1.7\times10^{-39}$\\\hline
  %$C_L$ & $4.2\times10^{-2}$ &  $5.3\times10^{-4}$ &  $3.2\times10^{-6}$ & $1.1\times10^{-8}$ & $2.6\times10^{-11}$ & $4.2\times10^{-14}$ & $5.1\times10^{-17}$ & $4.7\times10^{-20}$\\\hline
\end{tabular}
\egroup
\caption{The terms multiplying $\|\underline{c}\|_1$ in~\eqref{eq:C_La} and~\eqref{eq:C_Lb} for $1\leq L\leq 18$. 
The shaded boxes show the smallest value of $L$ for which these fall below single and double precision. 
}\label{tbl:L}
\end{table}
% NOTE: The following bound \cite[(5.6.1)]{NISTHandbook} is pretty sharp\todoAT{What is happening here? Should this be a remark?}\todoNH{I was wondering if we wanted to try and solve for $L$. Remove this if not bothered.}:
% $$
%     C_L \leq \left(\frac{0.508921238813849}{L^2}\right)^L\sqrt{L/\pi},
% $$
% Can we use this to solve for $L$ given $\varepsilon$? 
%%%%%%%%%%%%%%%%%%%%%%%%%%%%%%%%%%%%%%%%%%%%%%%%%%%%%%%%%%%%%%%%%%%%%%%%%%%%%%%%%%%%%%%%%%%%%%%%%%%%%%%
%%%%%%%%%%%%%%%%%%%%%%%%%%%%%%%%%%%%%%%%%%%%%%%%%%%%%%%%%%%%%%%%%%%%%%%%%%%%%%%%%%%%%%%%%%%%%%%%%%%%%%%
%%%%%%%%%%%%%%%%%%%%%%%%%%%%%%%%%%%%%%%%%%%%%%%%%%%%%%%%%%%%%%%%%%%%%%%%%%%%%%%%%%%%%%%%%%%%%%%%%%%%%%%
\subsection{Computing the discrete cosine and sine transforms}\label{subsec:dct9}
To complete our fast algorithm for the NDCT we require a fast way to compute the sums contained inside the parenthesis of~\eqref{eqn:terms}.
In particular, writing~\eqref{eqn:terms} 
in vector form, we have
\begin{equation}\label{eqn:lincomb}
 \underline{f}_{\,k,L} = \sum_{\substack{\ell=0\\even}}^{L-1}\frac{(-1)^{\lfloor(\ell+1)/2\rfloor}}{\ell!}D_{\underline{\dt}}^\ell {\rm DCT}(\underline{c}^{[\ell]})
 + \sum_{\substack{\ell=1\\odd}}^{L-1} \frac{(-1)^{\lfloor (\ell+1)/2\rfloor}}{\ell!}D_{\underline{\dt}}^\ell {\rm DST}(\underline{c}^{[\ell]}),
\end{equation}
where $[\underline{c}^{[\ell]}]_n = n^\ell c_n$, $n = 0,\ldots, N-1$.
%we see it is little more than a finite linear combination of diagonally scaled discrete cosine and discrete sine transformations.
When $\underline{\t}^\ast = \underline{\t}^{cheb_1}$ the DCT and DST take the form 
\[
    \sum_{n=0}^{N-1} n^\ell c_n \cos\left(\frac{n(k+\frac{1}{2})\pi}{N}\right), \qquad  \sum_{n=0}^{N-1} n^\ell c_n \sin\left(\frac{n(k+\frac{1}{2})\pi}{N}\right), \qquad 0\leq k \leq N-1,
\]
which are readily related to the DCT-III and DST-III, respectively. When $\underline{\t}^\ast = \underline{\t}^{cheb_\ast}$ they become
\[
    \sum_{n=0}^{N-1} n^\ell c_n \cos\left(\frac{n((2k+1)+\frac{1}{2})\pi}{2N+1}\right), \quad  \sum_{n=0}^{N-1} n^\ell c_n \sin\left(\frac{n((2k+1)+\frac{1}{2})\pi}{2N+1}\right), \quad 0\leq k \leq N-1,
\]
% \begin{equation}\label{eq:sum1}
%  %\sum_{n=0}^{N-1} c_n T_n( x^\ast_{k} ) = 
%  \sum_{n=0}^{N-1}c_nT_n(x^\ast_k) 
%  %= \cos\left(\frac{n(k+\frac{3}{4})\pi}{N+\frac{1}{2}}\right) 
%  = \sum_{n=0}^{N-1}c_n\cos\left(\frac{n((2k+1)+\tfrac{1}{2})\pi}{2N+1}\right), \quad 0\leq k \leq N-1.
% \end{equation}
which are equivalent to the odd terms of a DCT-III and DST-III of length $2N+1$. 
Hence, both may be evaluated in $\mathcal{O}(N\log N)$ operations.

\begin{remark}\label{remark1}

Although, for an accuracy of double precision, the $\underline{\t}^{cheb_*}$ points 
require around half the number of terms in the Taylor series compared to $\underline{\t}^{cheb_1}$
(see Table~\ref{tbl:L}), we see from above that each term will be roughly twice as
expensive to evaluate. Hence, we expect little difference between the two approaches when there is a 
16-digit accuracy goal.
%However, for an accuracy goal of, say, $10^{-3}$ one should pick $\underline{\t}^\ast = \underline{\t}^{cheb_*}$.
\end{remark}

\begin{remark}\label{remark:dct}
    The numerical results in this paper are computed in MATLAB. Although MATLAB
    uses FFTW for FFT computations \cite[]{FFTW}, it does not allow access to FFTW's DCT and DST routines (\lstinline[basicstyle=\ttfamily\footnotesize]{REDFT01}
    and \lstinline[basicstyle=\ttfamily\footnotesize]{RODFT01}, respectively.).\footnote{
    The MATLAB signal processing toolbox has a \lstinline[basicstyle=\ttfamily\footnotesize]{dct()} function, but this is computed via an FFT of double the length.} 
    Instead, we use the DCT and DST codes implemented
    in Chebfun~\cite[]{chebfun},
    which compute the DCT and DST via a complex-valued FFT 
    of double the length. Based on our experiments, we believe this to be a factor of 2-4
    slower than calling \lstinline[basicstyle=\ttfamily\footnotesize]{REDFT01} and \lstinline[basicstyle=\ttfamily\footnotesize]{RODFT01} directly.
\end{remark}

\begin{remark}
 This section described a Taylor-based NDCT, which is closely related to the Taylor-based 
 NFFT described by~\cite{Kunis_08_01}. Alternative approaches based on 
 oversampling and low-pass filters are described in \cite{Dutt_93_01} and~\cite{Potts_03_01}. 
 Although for general point distributions the oversampled NFFT and NDCT are considered faster than the Taylor-based approaches~\cite[]{Ware_98_01}
 we choose to use a Taylor-based NDCT because (a) it is simple and (b) Lemma~\ref{lemma:szego}, 
 provides a precise bound on the number of terms required in the expansion for 
 a given accuracy (see Table~\ref{tbl:L}). Since the required number of terms is small
 we expect the Taylor-based method to be competitive in this particular case.
 %We believe that the oversampled NDCT can be slightly faster in practice 
 %if precomputational cost is excluded. However, for applications involving adaptive 
 %discretizations the Taylor-based approach is faster because 
 %the precomputation in the oversampled NDCT depends on knowledge of $N$. 
\end{remark}

%%%%%%%%%%%%%%%%%%%%%%%%%%%%%%%%%%%%%%%%%%%%%%%%%%%%%%%%%%%%%%%%%%%%%%%%%%%%%%%%%%%%%%%%%%%%%%%%%%%%%%%
%%%%%%%%%%%%%%%%%%%%%%%%%%%%%%%%%%%%%%%%%%%%%%%%%%%%%%%%%%%%%%%%%%%%%%%%%%%%%%%%%%%%%%%%%%%%%%%%%%%%%%%
%%%%%%%%%%%%%%%%%%%%%%%%%%%%%%%%%%%%%%%%%%%%%%%%%%%%%%%%%%%%%%%%%%%%%%%%%%%%%%%%%%%%%%%%%%%%%%%%%%%%%%%

\subsection{Direct approach}\label{subsec:rec}
To test the fast algorithm above we construct $\mathbf{T}_N(\underline{x}^{leg})$ 
via the three-term recurrence relation:
\[
T_0(x) = 1, \qquad T_1(x) = x, \qquad T_{n+1}(x) = 2xT_n(x) - T_{n-1}(x), \quad n\geq 1.
\]
Although this approach requires $\mathcal{O}(N^2)$ operations to compute $\mathbf{T}_N(\underline{x}^{leg})\underline{c}$,
the memory cost can easily be reduced to $\mathcal{O}(N)$ by computing the matrix-vector product as a recurrence.
Hereinafter, we refer to this as the {\em direct approach}.

We note that one could instead compute the NDCT using the closed form expression of 
$T_n(\cos\t)$ to construct $\mathbf{T}_N(\underline{x}^{leg}) = \cos\left(\underline{\t}^{leg}[0, 1, \ldots, N-1]\right)$ and then evaluate the 
matrix-vector product $\underline{f} = \mathbf{T}_N(\underline{x}^{leg})\underline{c}$.
However, such a closed form is not available when we come to compute $\mathbf{P}_N(\underline{x}^{leg})$ in the next section.

%%%%%%%%%%%%%%%%%%%%%%%%%%%%%%%%%%%%%%%%%%%%%%%%%%%%%%%%%%%%%%%%%%%%%%%%%%%%%%%%%%%%%%%%%%%%%%%%%%%%%%%
%%%%%%%%%%%%%%%%%%%%%%%%%%%%%%%%%%%%%%%%%%%%%%%%%%%%%%%%%%%%%%%%%%%%%%%%%%%%%%%%%%%%%%%%%%%%%%%%%%%%%%%
%%%%%%%%%%%%%%%%%%%%%%%%%%%%%%%%%%%%%%%%%%%%%%%%%%%%%%%%%%%%%%%%%%%%%%%%%%%%%%%%%%%%%%%%%%%%%%%%%%%%%%%

\subsection{Numerical results for the NDCT}\label{subsec:NumericalResults}
All the numerical results were performed on a 3.40GHz Intel Core i7-2600 PC
with MATLAB 2015a in Ubuntu Linux. The vector of transformed nodes $\underline{\t}^{leg}$ is computed with 
the \lstinline{legpts()} command in Chebfun \cite[v5.2]{chebfun}
which uses Bogaert's algorithm~\cite[]{Bogaert_14_01}. As discussed in Remark~\ref{remark:dct},
DCTs and DSTs are computed using the \lstinline{chebfun.dct()} and \lstinline{chebfun.dst()} routines. MATLAB codes
for reproducing all of the results contained within this paper are available online \cite[]{Hale_15_01}.

As a first test we take randomly distributed vectors $\underline{c}$ with various rates of 
decay and compare the accuracy of both the direct and FFT-based approaches against an extended precision computation
(Figure~\ref{fig:ndct_test01}).\footnote{
In particular, the vector corresponding to, say,  $N = 50$ with $\mathcal{O}(n^{-0.5})$ decay can be reproduced exactly 
by the MATLAB code \lstinline[basicstyle=\ttfamily\footnotesize]{rng(0,'twister'); c = rand(50,1)./sqrt(1:50).';}.}
The results for $\underline{\t}^{cheb_1}$ and $\underline{\t}^{cheb_\ast}$
in the FFT-based approach are virtually indistinguishable so we show only the former.
We find the FFT-based approaches are more accurate than the direct approach, despite our algorithm 
involving many significant approximations.  In many applications the Chebyshev expansion in~\eqref{eq:nuDCT} 
is derived by an approximation of a smooth function. In this setting, if the function is H\"{o}lder 
continuous with $\alpha>1/2$, we observe that that the FFT-based algorithm has essentially no error growth with $N$.
\begin{figure}[t]
\vspace*{-4cm}
  \centering
  \scriptsize
    \begin{overpic}[width=.49\textwidth]{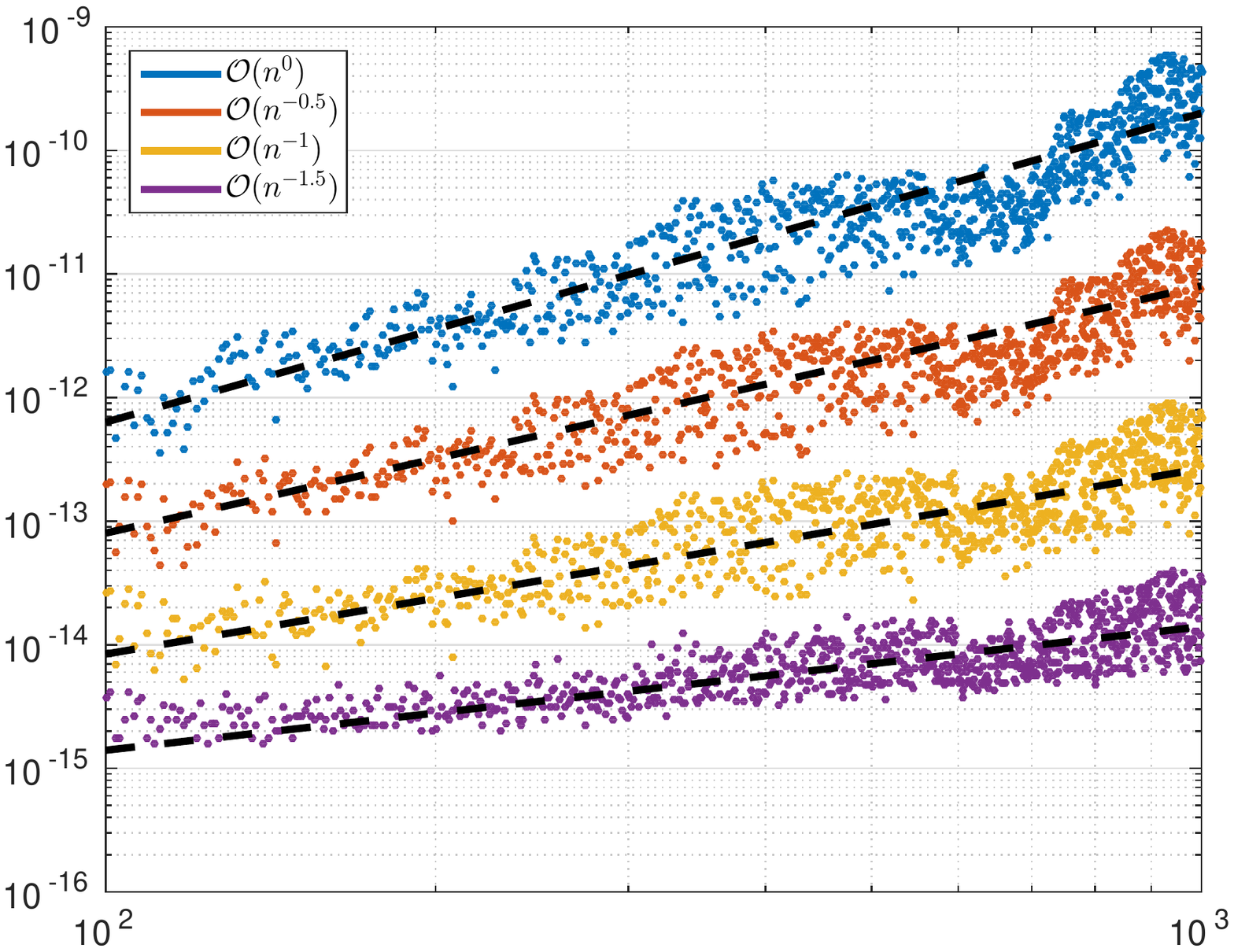}
     \put(37,0) {{$N$}}
     \put(0,20) {{\rotatebox{90}{Absolute error}}}
          \put(0,20) {{\rotatebox{90}{Absolute error}}}
     \put(48,43) {{\rotatebox{17}{\colorbox{white}{$\mathcal{O}(N^{2.5})$}}}}
     \put(48,35) {{\rotatebox{14}{\colorbox{white}{$\mathcal{O}(N^{2})$}}}}
     \put(48,27) {{\rotatebox{10}{\colorbox{white}{$\mathcal{O}(N^{1.5})$}}}}
     \put(48,20) {{\rotatebox{5}{\colorbox{white}{$\mathcal{O}(N)$}}}}
     %\put(30, 76) {Error in direct method for NDCT}
    \end{overpic}\hspace*{5pt}
    \begin{overpic}[width=.49\textwidth]{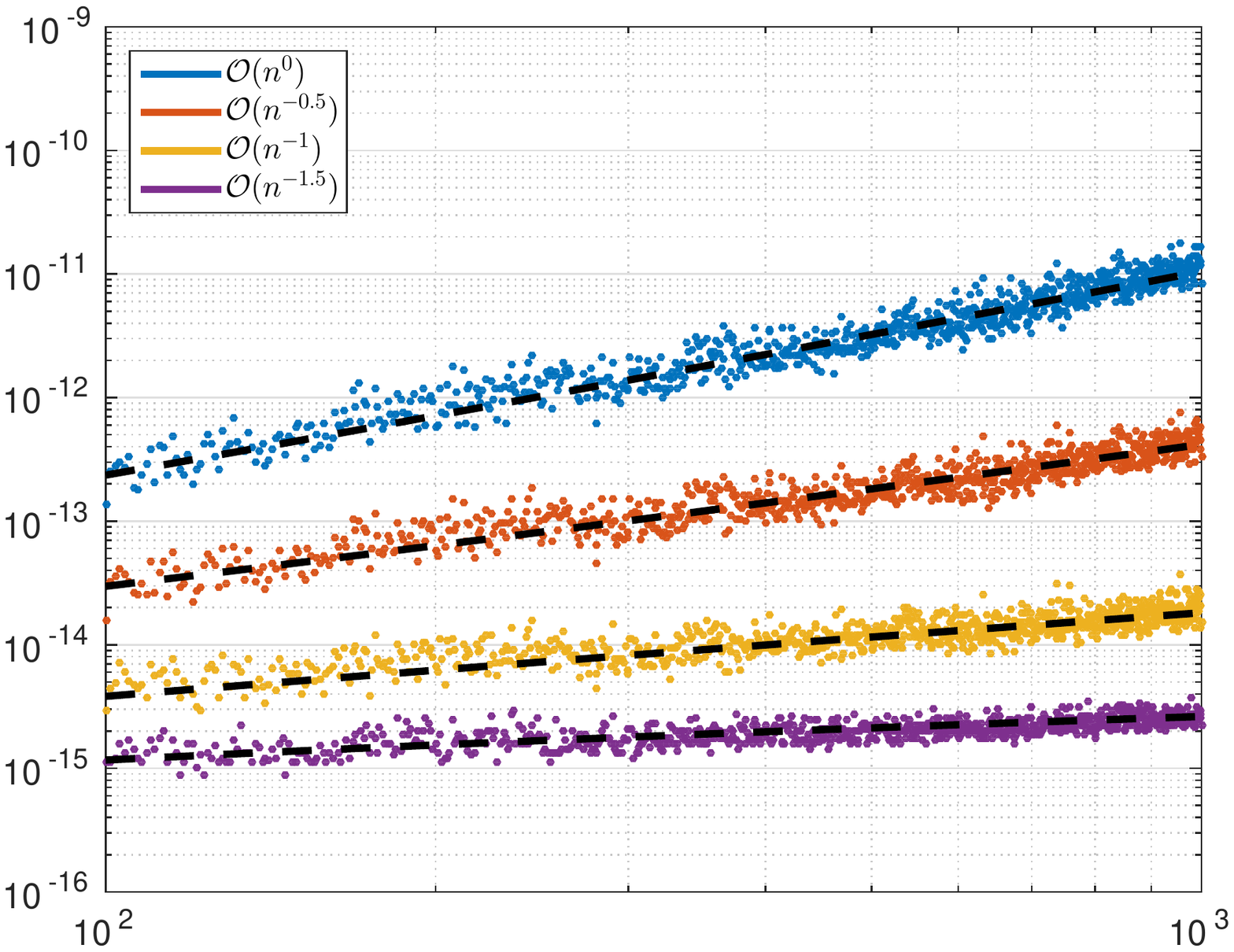}
     \put(37,0) {{$N$}}
     %\put(-4,30) {{\rotatebox{90}{Absolute error}}}
     \put(40,35) {{\rotatebox{10}{\colorbox{white}{$\mathcal{O}(N^2/\log^2N)$}}}}
     \put(40,28) {{\rotatebox{8}{\colorbox{white}{$\mathcal{O}(N^{1.5}/\log^2N)$}}}}
     \put(40,20) {{\rotatebox{6}{\colorbox{white}{$\mathcal{O}(N^{0.5}\log N)$}}}}
     \put(40,15) {{\rotatebox{2}{\colorbox{white}{$\mathcal{O}(\log^2N)$}}}}
     %\put(28, 76) {Error in FFT-based method for NDCT}
    \end{overpic}
    \caption{Errors in computing the NDCT of vectors with various rates of decay using the direct method (left) and the new FFT-based approach with $\underline{\theta}^{\ast} = \underline{\theta}^{cheb_1}$ (right). In both cases, 
    a vector of length $N$ is created using {\tt randn(N,1)} in MATLAB and then scaled so that the $n$th entry is $\mathcal{O}(n^0)$, $\mathcal{O}(n^{-0.5})$, $\mathcal{O}(n^{-1})$, and $\mathcal{O}(n^{-1.5})$, 
    giving the four curves in each panel. The dashed lines depict heuristic observations of the error growth in each case. The $\log$ factors may seem somewhat arbitrary, but the
    lines give a much better fit to the data when these are included.}\label{fig:ndct_test01}
\end{figure}

Our second test investigates the time taken to compute a real-valued NDCT of length $N$. Figure \ref{fig:ndct_test02} 
compares the direct approach (squares) and the FFT-based 
approach with $\t^{cheb_1}$ (triangles) and $\t^{cheb_*}$ (circles), in the range $100\leq N\leq 10,\!000$. The variation of computational times between consecutive
values of $N$ is typical of FFT-based algorithms, where the theoretical cost of the FFT as well as the 
precise algorithm employed by FFTW depends on the prime factorization of $N$. The triangles 
and circles lie on a piecewise linear least squares
fit to the computational timings, to suggest some kind of `average' time for near-by values of $N$. %\footnote{In fact, for each of the FFT-based curves we perform two linear least squares fits: one for $N > 650$ and $N\ge650$ for cheb$_1$, and one for $N > 1300$ and $N\ge1300$ for the cheb$_\ast$. This is to account for the noticeable jump in computational time at these $N$ values, most likely due to caching issues.} 
The variation also makes it difficult to say when the FFT-based algorithm becomes more efficient 
than the direct approach, 
but it occurs around $N=1,\!000$.
\begin{figure}[t]
\vspace*{-6cm}
  \centering
      \begin{overpic}[width=.7\textwidth]{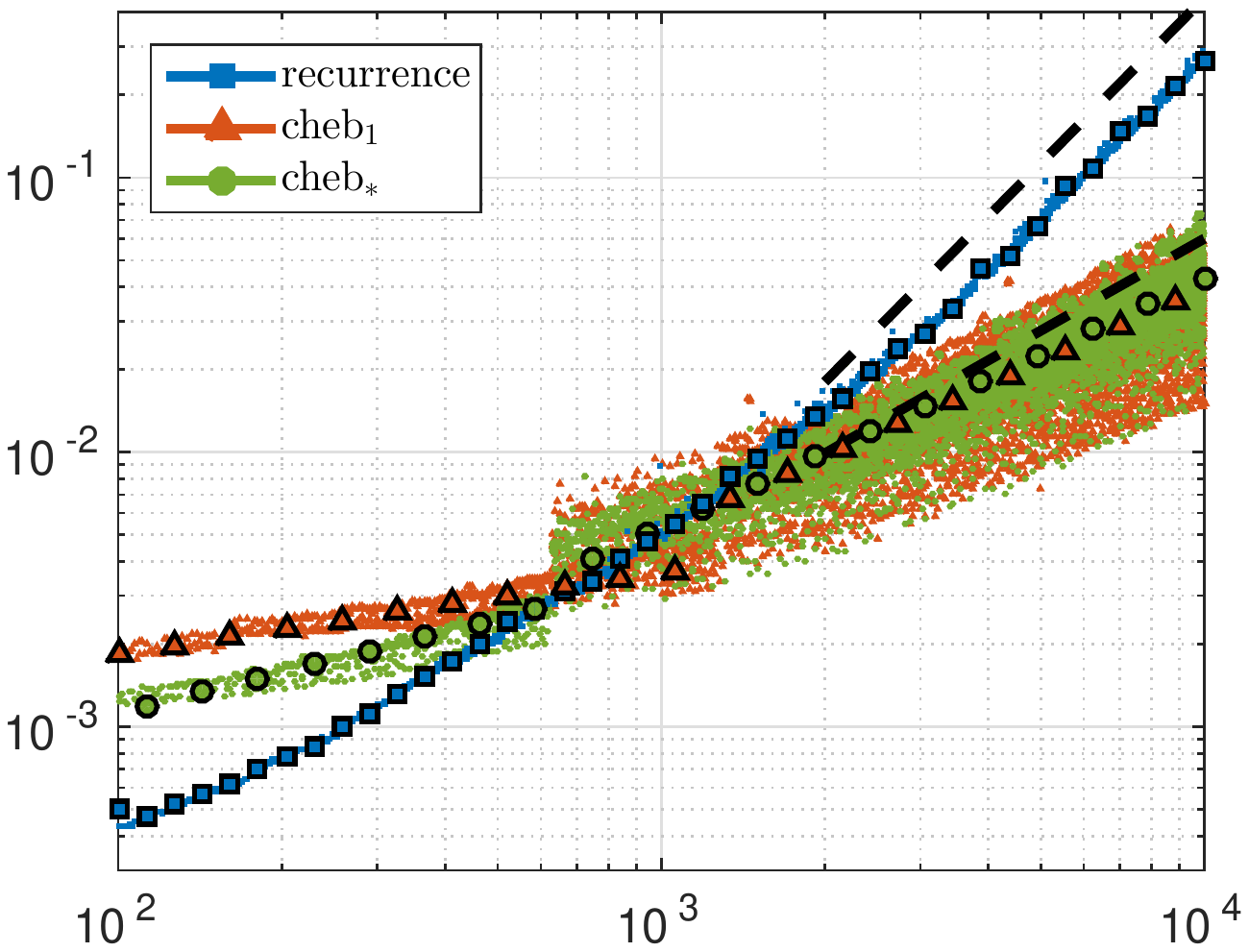}
     \put(30,15) {\scriptsize {$N$}}
     \put(0,30) {\scriptsize {\rotatebox{90}{Time (secs)}}}
     %\put(26, 77.4){Time to compute NDCT of length $N$}
     \put(40,47) {\scriptsize {\rotatebox{45}{$\mathcal{O}(N^2)$}}}
     \put(40,40) {\scriptsize {\rotatebox{28}{\colorbox{white!20}{$\mathcal{O}(N\log N)$}}}}
    \end{overpic}\hspace*{-3cm}
     \vspace*{-2cm}
%     \begin{overpic}[width=.49\textwidth]{ndct_time_ratio}
%      \put(52,-2) {{$N$}}
%      \put(0,26) {{\rotatebox{90}{$\text{cheb}_\ast \text{ time}/\text{cheb}_1 \text{ time}$}}}
%      \put(31, 77.2) {Ratio of $\text{cheb}_1$ and $\text{cheb}_\ast$ times}
%     \end{overpic}
    \caption{Time to compute a NDCT of various lengths using the direct approach (squares) and FFT-based approaches from Section~\ref{sec:dct} with $\underline{x}^{cheb_1}$ (triangles) and $\underline{x}^{cheb_\ast}$ (circles). 
    %The two FFT-based approaches are comparable for large $N$, but when $N$ is small the $\underline{x}^{cheb_\ast}$-based approach is faster because fewer FFT calls are required. 
    For larger values of $N$ the two FFT-based approaches require a comparable time, and the crossover with the direct approach occurs at around $N=1,\!000$.
    We believe the jump in the $\underline{x}^{cheb_\ast}$ time (circles) at around $N = 650$ 
    %and the slightly less noticeable jump in the $\underline{x}^{cheb_1}$ time at around $N = 1300$ 
    is caused by a switch in algorithmic details of the FFT routine.}\label{fig:ndct_test02}
    %Right: A more detailed comparison of the FFT-based approaches. In particular we plot the time for the $\underline{x}^{cheb_\ast}$ divided by the time for $\underline{x}^{cheb_1}$ for various $N$. Hence, if the dot appears above the dashed line $\underline{x}^{cheb_1}$ is faster. If it is below, then $\underline{x}^{cheb_\ast}$ is faster. }
\end{figure}

%
%----------------------------------------------------------
\section{The discrete Legendre transform}
\label{sec:dlt}
Our procedure for computing the DLT in~\eqref{eq:fastTransform} splits naturally into two stages. The first deals with the non-uniform frequency present 
in $P_n(\cos \theta)$ and converts the transform to one involving $T_n(\cos\theta)$ with uniform frequency in $\theta$. 
The second stage deals with the non-uniform grid $\cos({\t}^{leg}_0), \ldots, \cos(\t_{N-1}^{leg})$ and is the NDCT from the previous section. 

Denote by $M$ the ${N\times N}$ upper-triangular matrix given by
\[
 M_{kn} = \begin{cases} \frac{1}{\pi} \frac{ \Gamma( n/2 + 1/2 ) }{ \Gamma( n/2 + 1 ) }, & 0 = k\leq n \leq N-1,\quad n \text{ even},\\[3pt]
                        \frac{2}{\pi} \frac{ \Gamma( (n-k)/2 + 1/2 ) }{ \Gamma( (n+k)/2 + 1 ) }, & 0 < k\leq n \leq N-1,\quad k+n \text{ even},\\[3pt] 
                        0, & \text{otherwise}.\end{cases} 
\]
If $\underline{c}$ is the vector of Legendre coefficients in~\eqref{eq:fastTransform}
and $\underline{\hat c} = M\underline{c}$, then we have~\cite[]{Alpert_91_01}
 \begin{equation} 
 f_k = \sum_{n=0}^{N-1}c_nP_n(x_k^{leg}) = \sum_{n=0}^{N-1}\hat{c}_nT_n(x_k^{leg}),\qquad 0\leq k \leq N-1.
\label{eq:changeBasis}
\end{equation}
Now, the summation on the righthand side takes the form of the NDCT~\eqref{eq:nuDCT} and
we may use the algorithm described in 
Section~\ref{subsec:dct9} to compute $\underline{f}$ in $\mathcal{O}(N\log N)$ operations.

To compute $\underline{\hat c}$ directly, i.e.\ via the matrix-vector product $M\underline{c}$,  
 requires $\mathcal{O}(N^2)$ operations. Instead, we note that the transform 
$\underline{\hat c} = M\underline{c}$ 
%is called the {\em Chebyshev--Legendre} transform (as it 
%converts Legendre coefficients to Chebyshev coefficients) and 
can be 
computed in $\mathcal{O}(N(\log N)^2/\log\!\log N)$ operations using the 
FFT-based algorithm described in~\cite[]{Hale_14_01} or by the fast multipole-based 
approach in~\cite[]{Alpert_91_01}.  In this paper we use the algorithm in~\cite[]{Hale_14_01} 
because it has no precomputational cost and so allows for adaptive discretizations. 

Another convenient property of the transforms in this paper is that there is an asymptotically optimal
selection of algorithmic parameters for any working tolerance. In the NDCT, the number of terms in the 
Taylor series expansion can be precisely determined based on the working tolerance. 
We would like this to also hold for the $\mathcal{O}(N(\log N)^2/\log\!\log N)$ 
Chebyshev--Legendre transform described in~\cite[]{Hale_14_01}.  We remark that though
the paper considered only a working tolerance of machine precision throughout, the 
algorithmic parameters were derived and determined in terms of $\varepsilon>0$. We  
slightly modified our implementation to exploit arbitrary working tolerances. 
% In \cite[]{Hale_14_01} a target tolerance of machine precision was employed.  In Section~\ref{subsec:leg2cheb} we briefly 
% extend the analysis to determine optimal parameters for a given tolerance. In Section~\ref{subsec:dlt_results} 
% we give some brief results to demonstrate the accuracy and performance when computing the DLT using the algorithm
% as described above.
% 
% \subsection{The Legendre to Chebyshev transform}\label{subsec:leg2cheb}
% 
% The DLT that we advocate depends on the Chebyshev--Legendre transform that 
% was presented in~\cite{Hale_14_01}. One of the properties of our algorithm 
% for the DLT is that the algorithmic parameters can be selected in an 
% asymptotically optimal manner based on any accuracy goal. Unfortunately, 
% this was not realized in~\cite{Hale_14_01} and so that paper does not 
% present how the algorithmic parameters depend on $\varepsilon$. We 
% do this in this section. 
\subsection{Numerical results for the discrete Legendre transform}\label{subsec:dlt_results}
First, we take random vectors $\underline{c}$ with normally distributed entries and then scale
them to have algebraic rates of decay $\mathcal{O}(n^0)$, $\mathcal{O}(n^{-0.5})$, $\mathcal{O}(n^{-1})$, and $\mathcal{O}(n^{-1.5})$, as in 
Section~\ref{subsec:NumericalResults}. The accuracy of both the direct and FFT-based approaches is compared 
against an extended precision computation. The results are shown in Figure~\ref{fig:dlt_err} and one can see that 
the errors for the direct and FFT-based approach are comparable. In most practical applications the Legendre coefficients 
are associated to a smooth function so they decay at least algebraically. 
\begin{figure}[t]
\vspace*{-4cm}
  \centering
  \scriptsize
    \begin{overpic}[width=.49\textwidth]{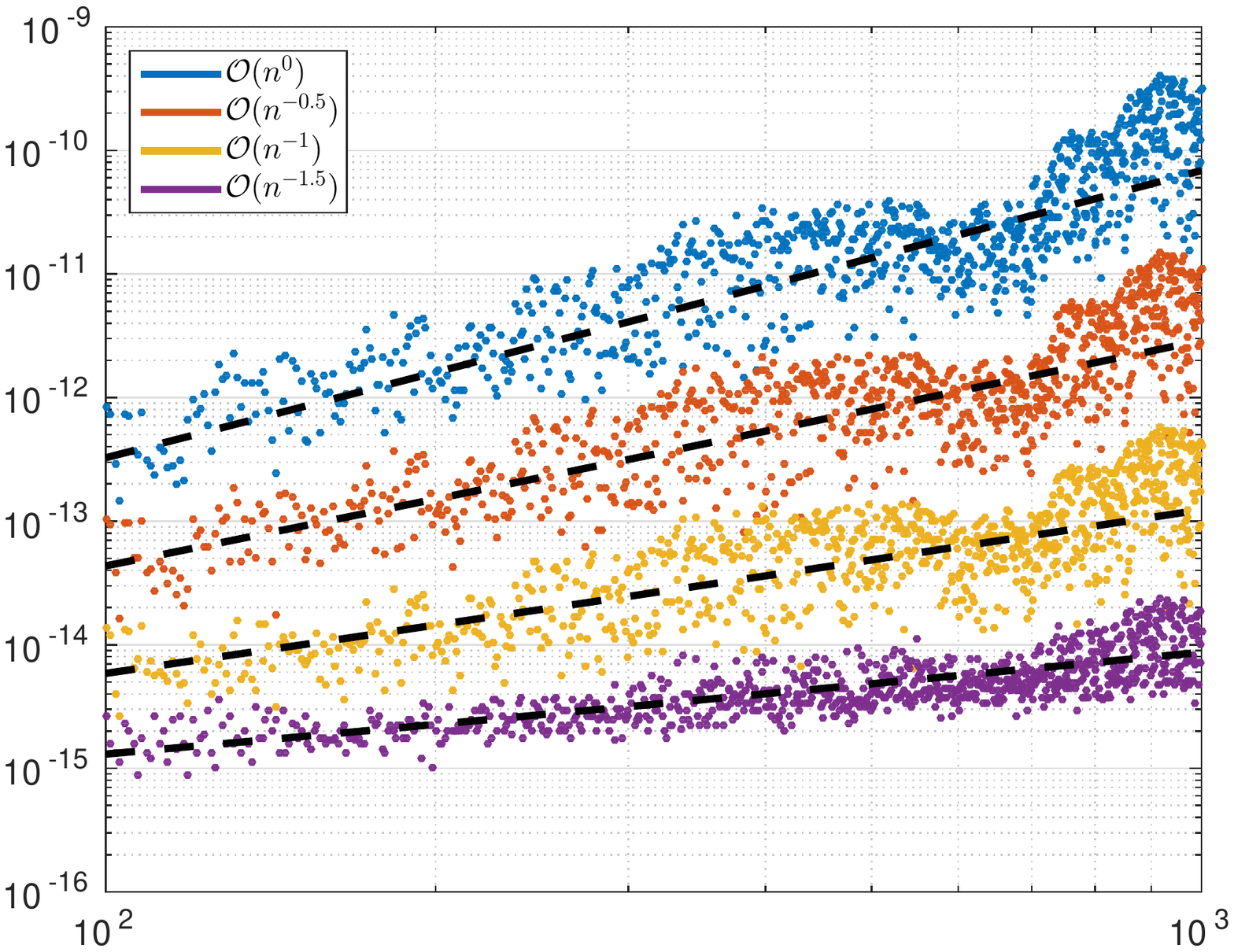}
     \put(37,0) {{$N$}}
          \put(0,22) {{\rotatebox{90}{Absolute error}}}
     \put(48,41) {{\rotatebox{14}{\colorbox{white}{$\mathcal{O}(N^{2.5}/\log N)$}}}}
     \put(48,32) {{\rotatebox{12}{\colorbox{white}{$\mathcal{O}(N^{2}/\log N)$}}}}
     \put(48,25) {{\rotatebox{9}{\colorbox{white}{$\mathcal{O}(N^{1.5}/\log N)$}}}}
     \put(48,18) {{\rotatebox{5}{\colorbox{white}{$\mathcal{O}(N/\log N)$}}}}
     %\put(30, 76) {Error in direct method for DLT}
    \end{overpic}\hspace*{5pt}
    \begin{overpic}[width=.49\textwidth]{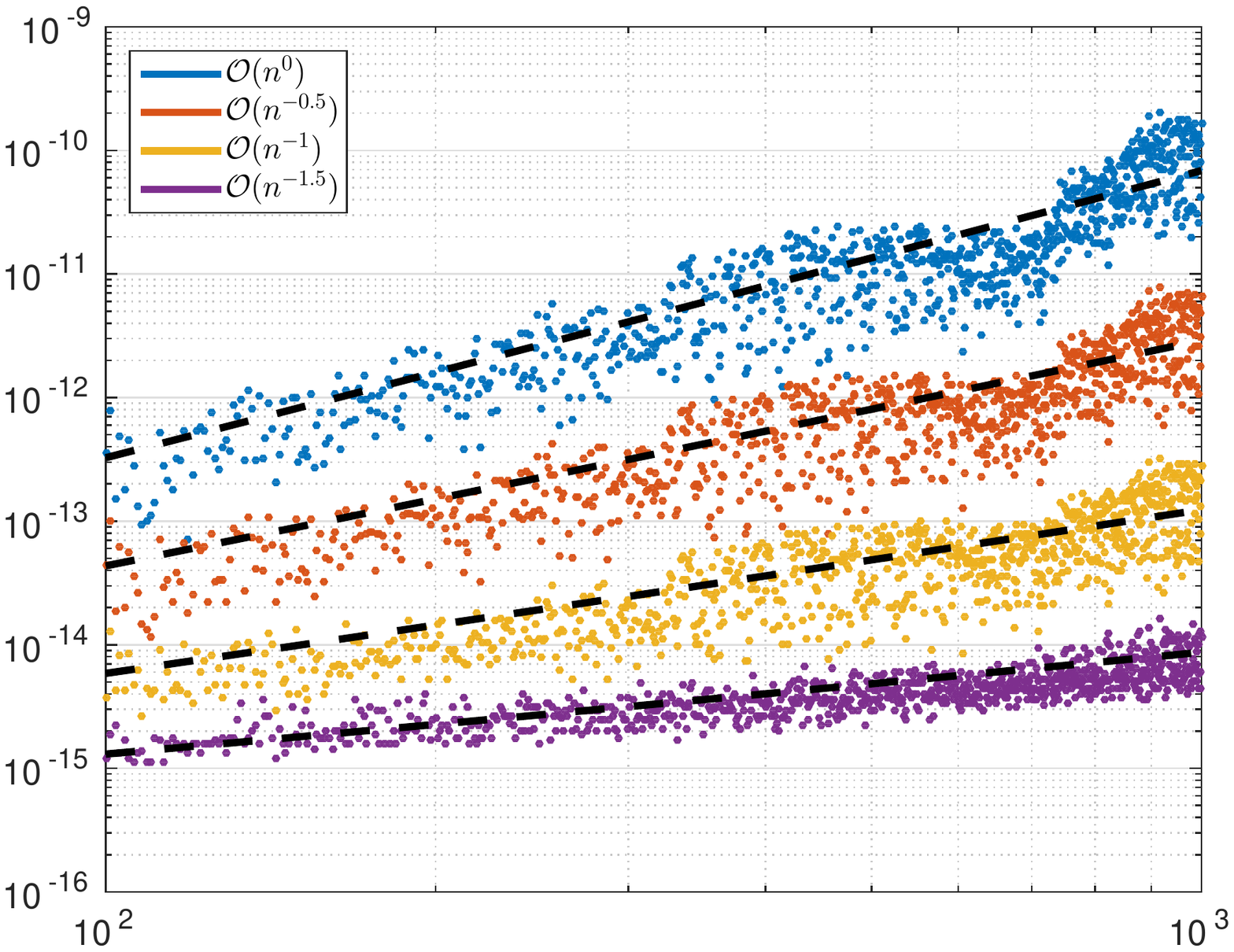}
     \put(37,0) {{$N$}}
     \put(48,41) {{\rotatebox{14}{\colorbox{white}{$\mathcal{O}(N^{2.5}/\log N)$}}}}
     \put(48,32) {{\rotatebox{12}{\colorbox{white}{$\mathcal{O}(N^{2}/\log N)$}}}}
     \put(48,25) {{\rotatebox{9}{\colorbox{white}{$\mathcal{O}(N^{1.5}/\log N)$}}}}
     \put(48,18) {{\rotatebox{5}{\colorbox{white}{$\mathcal{O}(N/\log N)$}}}}
      %\put(27, 76) {Error in FFT-based method for DLT}
    \end{overpic}
    \caption{Errors in computing the DLT of vectors with various rates of decay using the direct approach (left) and the FFT-based approach (right). In both cases, 
    a vector of length $N$ is created using {\tt randn(N,1)} in MATLAB and then scaled so that the $n$th entry is $\mathcal{O}(n^0)$, $\mathcal{O}(n^{-0.5})$, $\mathcal{O}(n^{-1})$, and $\mathcal{O}(n^{-1.5})$, 
    giving the four curves in each panel. The dashed lines depict heuristic observations of the error growth in each case. 
%Same as Figure~\ref{fig:ndct_test01} but for the DLT. For now the dashed lines on the left and right are the same for easy comparison. It seems the $\mathcal{O}(N\log N)$ error in {\tt leg2cheb()} (See Fig 5.2 in cheb2leg paper) is hurting us here.
}\label{fig:dlt_err}
\end{figure}

While we achieve a similar error to the direct approach, our FFT-based approach has a lower complexity and are more computationally efficient for large $N$. Figure~\ref{fig:dlt_time} shows 
a comparison of the computational times for the direct and FFT-based approach.  When comparing with Figure~\ref{fig:ndct_test02} we see 
that the computational time for the DLT is dominated by the \lstinline{leg2cheb} transformation. For an accuracy goal of double 
precision, the crossover between the direct and FFT-based approach is approximately $N = 5,\!000$.%
% \begin{figure} 
%  \begin{minipage}{.49\textwidth} 
%   \centering
%     \fbox{\parbox{\textwidth}{\hspace{.1cm}\vspace{.8\textwidth}}}
%  \end{minipage}
%  \begin{minipage}{.49\textwidth} 
%   \centering
%      \fbox{\parbox{\textwidth}{\hspace{.1cm}\vspace{.8\textwidth}}}
%  \end{minipage}
%  \caption{Left: the execution time of the $\mathcal{O}(N^2)$ direct approach 
%  and our algorithm for the DLT when the working accuracy is $\varepsilon=10^{-3},10^{-8},10^{-15}$
%  for $10\leq N\leq 10^5$. We see that the DLT 
%  is faster than the direct approach when $N\geq ??$ for $\varepsilon=10^{-3}$, $N\geq ??$ for $\varepsilon=10^{-8}$, and $N\geq ??$ for $\varepsilon=10^{-15}$. Right: the absolute error in DLT for a working 
%  accuracy of $\varepsilon = 10^{-3}, 10^{-8},10^{-15}$. The dotted lines are 
%  the theoretical bound of $\varepsilon \|c\|_1$ (see Lemma~\ref{}).}
% \end{figure}
\begin{figure}[t]
\vspace{-5cm}
  \centering
  \scriptsize
    \begin{overpic}[width=.6\textwidth]{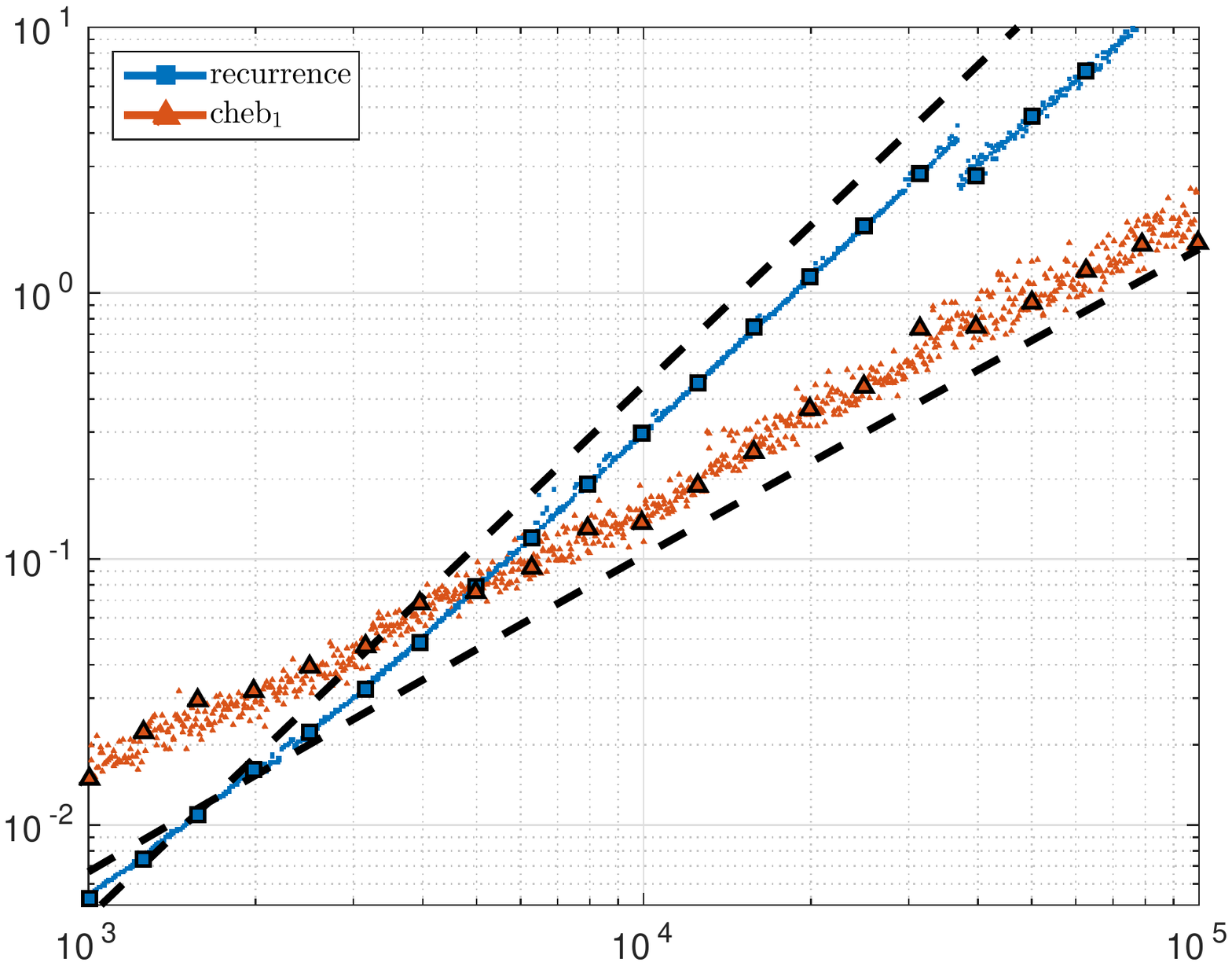}
     \put(37,0) {{$N$}}
     \put(0,25) {{\rotatebox{90}{Time (secs)}}}
     %\put(26, 77.4){Time to compute DLT of length $N$}
     \put(47,44) {{\rotatebox{43}{{$\mathcal{O}(N^2)$}}}}
     \put(42,23) {{\rotatebox{29}{{$\mathcal{O}(N(\log N)^2/\log\log N)$}}}}
    \end{overpic}
    \caption{Time taken to compute the DLT of length $N$ using the direct approach (squares) and FFT-based approach (triangles). 
    Here we choose 1000 logarithmically spaced values of $N$ in the range $10^3$--$10^5$. The larger squares and rectangles are every 50th one of these.
    %As in Figure~\ref{fig:ndct_test01} we take values in the range $10^3$--$10^5$ and use larger markers to denote a linear fit.
    We omit the results for the FFT-based approach using the points $\underline{x}^{cheb_\ast}$ as they are indistinguishable 
    from those of $\underline{x}^{cheb_1}$.
  }\label{fig:dlt_time}
\end{figure}
%
%----------------------------------------------------------
\section{The inverse discrete Legendre transform}
\label{sec:idlt}
% \subsection{The inverse non-uniform DCT}
% Although not required for the IDLT which follows, for completeness we 
% describe how the transform (?.?) can be inverted. We refer to 
% this as the INDCT, and observe that it is equivalent
% to computing Chebyshev coefficients from sample values at the 
% Legendre nodes.
We now turn to the IDLT, which can be seen as taking values of a function on 
an $N$-point Legendre grid and returning the corresponding Legendre coefficients 
of the degree $N-1$ polynomial interpolant. The basis of the algorithm we propose 
is analogous to the forward transform and has the same $\mathcal{O}(N(\log N)^2/\log\!\log N)$ complexity. 
Unlike most NFFT algorithms which resort to an iterative approach for 
computing the inverse transform \cite[]{anderson_96_01, Dutt_93_01},
here we take advantage of the orthogonality of Legendre polynomials, and in particular 
the exactness of Gauss--Legendre quadrature, to derive a direct method for the IDLT.

It is convenient to work with matrix notation rather than summations, and we denote the $N\times N$ matrix with $(j,k)$ entries $P_k(x_j^{leg})$ as 
$\mathbf{P}_N(\underline{x}^{leg})$ and the $N\times N$ with $(j,k)$ entry $T_k(x_j^{leg})$ 
as $\mathbf{T}_N(\underline{x}^{leg})$.
Using the orthogonality of Legendre polynomials and the exactness of Gauss--Legendre quadrature, the IDLT~\eqref{eq:fastTransformInv} can then be conveniently written as
\begin{equation}\label{eqnn:cinmatform}
\underline{c} = \mathbf{P}_N^{-1}(\underline{x}^{leg})\underline{f} = D_{\underline{s}}\mathbf{P}_N^T(\underline{x}^{leg})D_{\underline{w}^{leg}}\underline{f},
\end{equation}
where $\underline{s} = (\|P_0\|_2^{-2}, \ldots, \|P_{N-1}\|_2^{-2})^T$ and $\underline{w}^{leg}$ is the vector of Gauss--Legendre quadrature weights. 
%The relation~\eqref{eqnn:cinmatform} shows that, up to a diagonal scaling, the DLT is an orthogonal transformation. 
From~\eqref{eq:changeBasis} we know that, for any vector $\underline{c}$, 
$\mathbf{P}_N(\underline{x}^{leg})\underline{c} = \mathbf{T}_N(\underline{x}^{leg})\hat{\underline{c}} = \mathbf{T}_N(\underline{x}^{leg})M{\underline{c}}$,
and hence we have the matrix decomposition $\mathbf{P}_N(\underline{x}^{leg}) = \mathbf{T}_N(\underline{x}^{leg})M$.
Substituting this into~\eqref{eqnn:cinmatform} we obtain
\[
 \underline{c} = D_{\underline{s}}M^T\mathbf{T}^{T}_N(\underline{x}^{leg})D_{\underline{w}}\underline{f}.
\]
Thus, $\underline{c}$ can also be computed in $\mathcal{O}(N(\log N)^2/\log\!\log N)$ operations because:
\begin{enumerate}
 \item $D_{\underline{w}}$ and $D_{\underline{s}}$ are diagonal matrices, so can be readily be applied to a vector in $\mathcal{O}(N)$ operations,
 \item $\mathbf{T}_N^T(\underline{x}^{leg})$ is precisely the transpose of the NDCT in Section~\ref{sec:dct}. Since the transpose of the DCTs and DSTs
 in Section~\ref{subsec:dct9} can themselves be expressed in terms of DCTs and DSTs (of type-II), $\mathbf{T}_N^T(\underline{x}^{leg})$ can be approximated 
 in $\mathcal{O}(N\log N)$ operations by taking the transpose of~\eqref{eqn:lincomb}.
 \item %$M$ is a matrix that converts coefficients in a Legendre series expansion  to Chebyshev coefficients. Thus, a 
 The matrix-vector product with $M^T$ is closely related to the
 Chebyshev--Legendre transform and can be computed in $\mathcal{O}(N(\log N)^2/\log\!\log N)$ 
 operations by a slight modification of the algorithm in~\cite[]{Hale_14_01}. (Alternatively, one could use the ideas in~\cite[]{Alpert_91_01}.)
\end{enumerate}

\subsection{Numerical results for the inverse discrete Legendre transform} 
% \begin{figure} 
%  \begin{minipage}{.49\textwidth} 
%   \centering
%     \fbox{\parbox{\textwidth}{\hspace{.1cm}\vspace{.8\textwidth}}}
%  \end{minipage}
%  \begin{minipage}{.49\textwidth} 
%   \centering
%      \fbox{\parbox{\textwidth}{\hspace{.1cm}\vspace{.8\textwidth}}}
%  \end{minipage}
%  \caption{Left: the execution time of the $\mathcal{O}(N^2)$ direct approach 
%  and our algorithm for the IDLT when the working accuracy is $\varepsilon=10^{-3},10^{-8},10^{-15}$
%  for $10\leq N\leq 10^5$. We see that the IDLT 
%  is faster than the direct approach when $N\geq ??$ for $\varepsilon=10^{-3}$, $N\geq ??$ for $\varepsilon=10^{-8}$, and $N\geq ??$ for $\varepsilon=10^{-15}$. Right: the absolute error in IDLT for a working 
%  accuracy of $\varepsilon = 10^{-3}, 10^{-8},10^{-15}$. The dotted lines are 
%  the theoretical bound of $\varepsilon \|c\|_1$ (see Lemma~\ref{}).}
% \end{figure}
% 
% \begin{figure} 
%  \begin{minipage}{.49\textwidth} 
%   \centering
%     \fbox{\parbox{\textwidth}{\hspace{.1cm}\vspace{.8\textwidth}}}
%  \end{minipage}
%  \begin{minipage}{.49\textwidth} 
%   \centering
%      \fbox{\parbox{\textwidth}{\hspace{.1cm}\vspace{.8\textwidth}}}
%  \end{minipage}
%  \caption{The IDLT is the inverse of the DLT. Here we compare $\hat{c}_k$ and $c_k$, where $\underline{\hat{c}} = idlt(dlt(c))$. We see that the 
%  error grows like $\mathcal{O}()$, as expected. If one wishes to have $\|\underline{\hat{c}} - \underline{c}\|_\infty = \mathcal{O}(1)$, then it 
%  is sufficient to have $|c_k| = o(k^{-2})$.}
% \end{figure}
For the inverse transform, the direct approach requires $\mathcal{O}(N^2)$ operations
to compute the IDLT of length $N$, rather than the naive $\mathcal{O}(N^3)$ standard inversion algorithm. 
This is because the direct approach we have implemented exploits the discrete orthogonality 
relation that holds for Legendre polynomials (see~\eqref{eqnn:cinmatform}). The matrix-vector 
product with $\mathbf{P}_N^T(\underline{x}^{leg})$ in~\eqref{eqnn:cinmatform} is computed 
by using the three-term recurrence relations satisfied by Legendre polynomials, except now along 
rows instead of columns. 

For our first numerical experiment we take randomly generated vectors with normally distributed entries 
and compare the accuracy of both the direct and FFT-based approach against 
an extended precision computation (see Figure~\ref{fig:lastfig}, left). Unlike in Section~\ref{subsec:NumericalResults} 
and Section~\ref{subsec:dlt_results} we do not consider decay in these vectors, as they will typically correspond to function
values in space where there is no reason to expect decay. We find that the errors incurred by our FFT-based IDLT 
are consistent with the direct approach.

In Figure~\ref{fig:lastfig} (right) we show the execution time for computing the IDLT with 
the direct approach (squares) and FFT-based approach (triangles). As with the DLT, our IDLT algorithm is more computationally 
efficient than the direct approach when $N\geq 5,\!000$. 
\begin{figure} 
\vspace{-4cm}
  \centering
  \scriptsize
    \begin{overpic}[width=.49\textwidth]{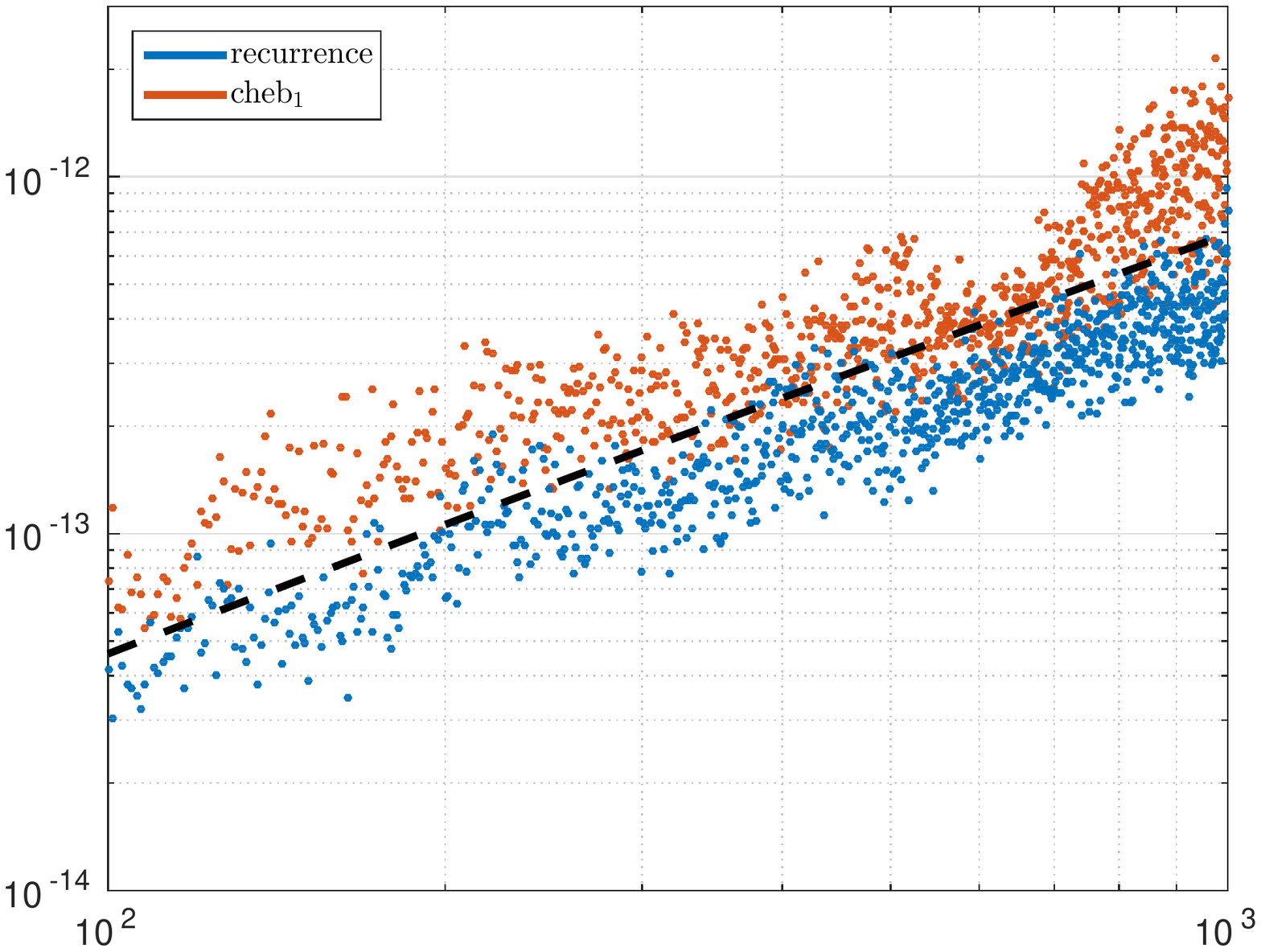}
     \put(37,0) {{$N$}}
     \put(0,22) {{\rotatebox{90}{Absolute error}}}
     \put(48,41) {{\rotatebox{20}{\colorbox{white}{$\mathcal{O}(N \log N)$}}}}
     %\put(36, 76) {Errors in computing IDLT}
    \end{overpic}\hspace*{5pt}
    \begin{overpic}[width=.483\textwidth]{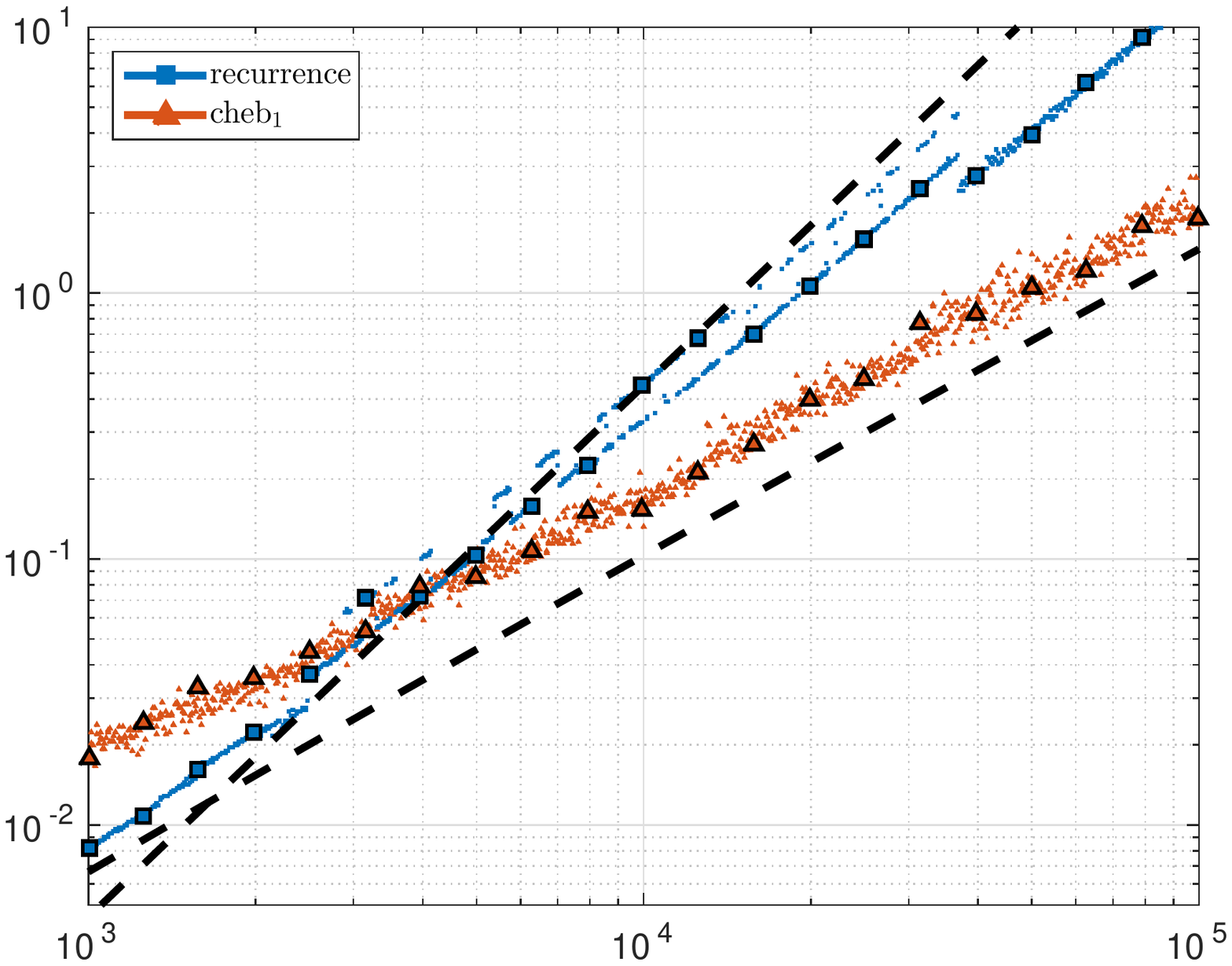}
     \put(37,0) {{$N$}}
     \put(0,22) {{\rotatebox{90}{Time (secs)}}}
     %\put(26, 77.4){Time to compute IDLT of length $N$}
     \put(45,43) {{\rotatebox{43}{{$\mathcal{O}(N^2)$}}}}
     \put(40,22) {{\rotatebox{29}{{$\mathcal{O}(N(\log N)^2/\log\log N)$}}}}
    \end{overpic}
    \caption{Left: Errors in computing the IDLT of vectors using the direct method and the FFT-based approach. 
    A vector of length $N$ is created using {\tt randn(N,1)} in MATLAB with the dashed line showing the observed
    growth of the maximum absolute error. 
    Right: Time taken to compute the IDLT of length $N$ using the direct approach (squares) and FFT-based approach (triangles). }
    \label{fig:lastfig}
\end{figure}

%
% \section{An inverse non-uniform discrete cosine transform}
% \label{sec:idct}
% \input{sec_idct}
%
%----------------------------------------------------------
% \section{Applications}
% \label{sec:app}
% \input{sec_app}
%
%----------------------------------------------------------
\section*{Conclusion}
\label{sec:conc}
We have presented fast and simple algorithms for the discrete Legendre 
transform and its inverse, which rely on a nonuniform discrete cosine transform 
and the Chebyshev--Legendre transform. Both components are based on the 
fast Fourier transform and have no precomputational cost. For an $N$-point 
transform both algorithms have a complexity of $\mathcal{O}(N(\log N)^2/\log\log N)$ operations
and are faster than the direct approach when $N\geq 5,\!000$. MATLAB code
to compute the transformations (and all the results contained in this paper)
are available online \cite[]{Hale_15_01}. The codes are also accessible
in Chebfun \cite[]{chebfun} via \lstinline{chebfun.dlt()} and \lstinline{chebfun.idlt()}, respectively.
As part of the analysis of the algorithm we derived a bound on the 
distance between associated points in a Chebyshev and Legendre grid
of the same size (see Lemma~\ref{lemma:szego}), which we believe to be tighter than 
any similar bounds appearing in the literature.

%----------------------------------------------------------
\section*{Acknowledgments}
We thank Andr\'{e} Weideman and Anthony Austin for reading a draft version of this 
manuscript, and the anonymous referees for their useful suggestions. 
%----------------------------------------------------------

\renewcommand{\UrlFont}{\small\tt}
\bibliographystyle{IMANUM-BIB}
\bibliography{haletownsend2015_revised}

\end{document}